\DeclareSymbolFont{AMSb}{U}{msb}{m}{n}
\DeclareSymbolFontAlphabet{\mathbbm}{bbold}
\DeclareSymbolFontAlphabet{\mathbb}{AMSb}%
\DeclareRobustCommand{\SkipTocEntry}[5]{}
\newtheorem*{rep@theorem}{\rep@title}
\newcommand{\newreptheorem}[2]{%
	\newenvironment{rep#1}[1]{%
		\def\rep@title{#2 \ref{##1}}%
		\begin{rep@theorem}}%
		{\end{rep@theorem}}}
\theoremstyle{plain}
\newtheorem{theorem}{Theorem}[section]
\newtheorem{lemma}[theorem]{Lemma}
\newtheorem*{lemma*}{Lemma}
\newtheorem{corollary}[theorem]{Corollary}
\newtheorem{proposition}[theorem]{Proposition}
\newtheorem*{conjecture*}{Conjecture}
\newtheorem{assumption}[theorem]{Assumption}
\theoremstyle{definition}
\newtheorem{definition}[theorem]{Definition}
\theoremstyle{remark}
\newtheorem{remark}[theorem]{Remark}
\DeclareMathOperator{\Lip}{Lip}
\def\bq{\begin{eqnarray}}
	\def\eq{\end{eqnarray}}
\def\bqq{\begin{eqnarray*}}
	\def\eqq{\end{eqnarray*}}
\def\epsilon{\varepsilon}
\newcommand\1{{\ensuremath {\mathds 1} }}
\newcommand\pscal[1]{{\ensuremath{\left\langle #1 \right\rangle}}}
\newcommand{\bsnorm}[2][]{%
	\ifthenelse{\isempty{#1}}%
	{{\ensuremath{|\! |\! |  #2 |\! |\! |_{\beta,s}}}}
	{{\ensuremath{|\! |\! |  #2 |\! |\! |_{\beta,s}^#1}}}
}
\def\bN{\mathbb{N}}
\def\bR{\mathbb{R}}
\def\bP{\mathbb{P}} 
\def\bE{\mathbb{E}}
\def\cL {\mathcal{L}}
\def\cA{\mathcal{A}}
\def\cP{\mathcal{P}}
\def\cM{\mathcal{M}}
\DeclareMathAlphabet{\mathup}{OT1}{\familydefault}{m}{n}
\newcommand{\dx}[1]{\mathop{}\!\mathup{d} #1}
\newcommand{\pderiv}[3][]{\frac{\mathop{}\!\mathup{d}^{#1} #2}{\mathop{}\!\mathup{d} #3^{#1}}}
\def\d{\dx}
\newcommand{\eps}{\varepsilon}
\DeclarePairedDelimiter{\abs}{\lvert}{\rvert}
\DeclarePairedDelimiter{\bra}{(}{)}
\DeclarePairedDelimiter{\pra}{[}{]}
\DeclarePairedDelimiter{\set}{\{}{\}}
\DeclarePairedDelimiter{\skp}{\langle}{\rangle}
\newcommand{\customlabel}[2]{%
   \protected@write \@auxout {}{\string \newlabel {#1}{{#2}{\thepage}{#2}{#1}{}} }%
   \hypertarget{#1}{}
}
\numberwithin{figure}{section}
\numberwithin{equation}{section}
\def\calA{{\mathcal A}}
\def\calP{{\mathcal P}}
 \def\bbN{{\mathbb N}} 
  \def\bbR{{\mathbb R}}
\title[Stochastic particle system convergence to CGEDG]{%
Convergence of a Stochastic Particle System to the Continuous Generalized Exchange-Driven Growth Model}
\author{Chun Yin Lam \and André Schlichting}
\email{\{chun.lam,andre.schlichting\}@uni-ulm.de}
\address{Institute for Applied Analysis, University of Ulm, Germany}
\thanks{This work is supported by the Deutsche Forschungsgemeinschaft (DFG, German Research Foundation) under Germany's Excellence Strategy EXC 2044 --390685587, Mathematics M\"unster: Dynamics--Geometry--Structure.}
\begin{document}
\begin{abstract}
The continuous generalized exchange-driven growth model (CGEDG) is a system of integro-differential equations describing the evolution of cluster mass under mass exchange. 
The rate of exchange depends on the masses of the clusters involved and the mass being exchanged. 
This can be viewed as both a continuous generalization of the exchange-driven growth model and a coagulation-fragmentation equation that generalizes the continuous Smoluchowski equation.

Starting from a Markov jump process that describes a finite stochastic interacting particle system with exchange dynamics, we prove the weak law of large numbers for this process for sublinearly growing kernels in the mean-field limit.
We establish the tightness of the stochastic process on a measure-valued Skorokhod space induced by the $1$-Wasserstein metric, from which we deduce the existence of solutions to the (CGEDG) system.
The solution is shown to have a Lebesgue density under suitable assumptions on the initial data. 
Moreover, within the class of solutions with density, we establish the uniqueness under slightly more restrictive conditions on the kernel. 
\end{abstract}

\maketitle


\section{Introduction} 

In this work, we study a binary collision model known as the continuous generalized exchange-driven growth (CGEDG) equations which is given by the following set of integro-differential equations 
\begin{equation*}\label{eq:CGEDG}\tag{CGEDG}\begin{split}
\partial_t c (a)& = \int_0^a  \int_z^\infty K(x,a-z,z) c(x)c(a-z)   \d x   \d z  \\
&\quad - 
\int_0^a  \int_0^\infty   K(a,x,z) c(a) c(x)  \d x \d z \\
&\quad 
- \int_0^\infty \int_z^\infty K(x,a,z)c(x) c(a)  \d x \d z \\
&\quad 
+ \int_0^\infty \int_0^\infty K(a+z,x,z) c(x) c(a+z)  \d x  \d z.
 \end{split}\end{equation*}
The system describes the exchange of mass fractions $z\geq 0$ between clusters of size $x,y\geq 0$, which can be represented in the following diagram
\begin{equation*}
	\set*{x} + \set*{y+z} \xrightleftharpoons[K(x+z,y,z)]{K(x,y+z,z)} 
	\set*{x+z} + \set*{y} \,.
\end{equation*}
Note that an exchange event can be described by a fragmentation of the cluster $\set*{y+z}$ into $\set*{y}$ and $\set*{z}$ and immediate coagulation of $\set*{x}$ with $\set*{z}$ to $\set*{x+z}$. 
In this sense, the system bears similarity to the classical continuous Smoluchowski coagulation-fragmentation equation~\cite{Smoluchowski1916}.

In this work, we propose a stochastic interacting particle system with exchange dynamics that converges weakly to the solution of~\eqref{eq:CGEDG}. 
The convergence result provides a weak law of large numbers for the stochastic process and it implies the existence of the weak solution to~\eqref{eq:CGEDG}. 
The proof is a tightness argument of probability measures on the Skorokhod space of measure-valued process. 
The full weak law of large numbers is proved in three steps. 
First, we provide criteria on the kernel $K$ such that the sequence of the finite particle process is tight. 
Second, under appropriate initial conditions, we show the limit measure has Lebesgue density. 
Third, under more restrictive assumptions on $K$, we show the uniqueness for solutions to~\eqref{eq:CGEDG}.

\subsection{Connections to previous works}

This model can be viewed as a generalized exchange-driven growth (EDG) model~\cite{BenNaimKrapivsky2003} with continuous cluster size. In the sense that, in the EDG model, the cluster sizes are positive integers and the kernel $(K(k,l-1))_{k,l\in\bN}$ only depends on the cluster sizes of the two clusters involved.
For the EDG model, the existence results were investigated analytically in \cite{Esenturk2018,EichenbergSchlichting2021,si2024existence}. 
The global or local in time existence of its solution depends on the growth of the kernel. 
For general kernels with at most linear growth $K(k,l)\le C_K k l$, the global existence was proved in \cite{Esenturk2018} and nonexistence for $K(k,l)\ge C_K k^{\beta}, \beta >1$.  
For superlinear growth, different regimes can be observed depending on the growth of the kernel. 
Under suitable symmetry conditions, global existence can still hold. 
In general, however, finite time or even instantaneous gelation, that is, the appearance of an infinite cluster, is expected for such type of models~\cite{Aldous1998,EscobedoMischlerPerthame2002}.
Moreover for the at most linear kernels, under the detailed balance conditions the long time convergence to equilibrium was proved in \cite{Schlichting2020,EsenturkValazquez2021}. 
In this case, there might exist a critical first moment above which there is a loss of mass to infinity in the long-time limit.

A first generalization of the EDG model is provided in~\cite{barik2024discrete}, where the kernel also depends on the mass being exchanged, but the cluster sizes are still discrete. 
The continuous generalization~\eqref{eq:CGEDG} has been proposed in \cite{barik2025continuous} where its well-posedness has been proved. 
In the works cited so far on the EDG model and its generalizations, the existence of a solution was typically derived analytically from the truncation of the system towards a finite dimensional problem and then proving suitable compactness for passing to the limit in the truncation parameter.
This approach of proof traces back to~\cite{BCP86} for the related Becker-Döring equations~\cite{BeckerDoering1935}.

For the EDG model, the convergence from the particle process to the mean-field equation is given in~\cite{GrosskinskyJatuviriyapornchai2019} using a stochastic approach and in~\cite{EDG-convergence} using variational techniques.
In this work, we give a convergence proof with the stochastic approach from a particle process to~\eqref{eq:CGEDG} for a general kernel with a sublinear growth condition, which supplements the well-posedness result from~\cite{barik2025continuous}. 
For the EDG model a similar weak law of large numbers was obtained in~\cite{GrosskinskyJatuviriyapornchai2019}. In comparison, we also relax the initial second moments condition to a class of superlinear moments. The main technical difference to~\cite{GrosskinskyJatuviriyapornchai2019} is that the state space for~\eqref{eq:CGEDG} is $\cP([0,\infty))$ instead of $\cP(\bN_0)$. As $[0,\infty)$ is not countable, the diagonal sequence argument used to identify the limit point as the solution to~\eqref{eq:CGEDG} needs to be modified. 

The system~\eqref{eq:CGEDG}, like the continuous Smoluchowski coagulation-fragmentation equation, is a differential-integral equation and models the formation of clusters via exchanges of particles. The Smoluchowski equation, which also has both discrete \cite{Smoluchowski1916,Laurencot2002,white1980global,spouge1985existence} and continuous versions~\cite{laurenccot2004coalescence,aldous1999deterministic}, is different from \eqref{eq:CGEDG} because for \eqref{eq:CGEDG} the exchange rate depends on the densities of the two involved clusters both in coagulation and in fragmentation. In this sense, \eqref{eq:CGEDG} is quadratic in both directions. Furthermore, in \eqref{eq:CGEDG} the rate is allowed to vary depending on the mass of particle being exchanged. 
The theory of stochastic process convergence to the Smoluchowki equation is much more developed and is generalized to different state spaces, other than real number sizes. The process we propose for the model~\eqref{eq:CGEDG} can be seen as an analogue to the Marcus-Lushnikov process~\cite{Marcus1968,Lushnikov1978,lushnikov1978some} for the Smoluchowski coagulation-fragmentation equation for which the convergence to the Smoluchowski equation has been proved in~\cite{aldous1999deterministic,norris1999smoluchowski,norris2000cluster,FournierGiet2004}. 
We restrict ourselves to kernels with sublinear growth, which is analogous to the sublinear regime of the EDG model so that one expects global in-time existence due to the absence of gelation.
However, to the best of our knowledge, the gelation aspects for the system~\eqref{eq:CGEDG} have not yet been investigated. 
Moreover, with this particle process approach, one may investigate the gelation phenomenon analogous to the Smoluchowski equations as done in~\cite{FournierLaurencot2009, andreis2024convergence, jeon1998existence}.
 
\subsection{Setting}
We consider the evolution of a system with $N\in \bbN$ particles of mass $\epsilon>0$ in $L\in \bbN$ nodes where the nodes are fully connected. Particles at the same node may be regarded as forming a single cluster, in this description, particles in a cluster can be transferred to any other cluster. A configuration $\eta$ in the configuration space $V^{N,L,\epsilon}$ is  a vector of length $L$, with $\eta_{i}\in\{0,\epsilon,...,N\epsilon\}$ giving the mass of particles in cluster $i \in \{1,...,L\}$.
The evolution of particles between clusters is given as a Markov jump process of finite state space with kernel $K$  depending on the cluster sizes of the source and destination clusters, as well as the amount of particles being transferred.  The empirical cluster distribution of a configuration~$\eta$ is given by $c(k \epsilon)=C^L[\eta](k\epsilon)  = \frac{1}{L} \sum_{i=1}^L  \delta_{\eta_i, k\epsilon}$ for $k\in{0,1,...,N}$. We denote the set of possible empirical measures as $\hat{V}^{N,L,\epsilon}$. We consider the family $(\hat{V}^{N,L,\epsilon})_{N\epsilon/L \to \rho}$ as a subset of probability measure on $\bR_+=[0,\infty)$. The convergence we will show is then from the empirical measure process to the delta measure at the solution of an evolution equation as $N\epsilon /L \to \rho\ge0$, and as $N,L \to \infty$, $\epsilon \to 0$.  This may be referred to as a thermodynamic limit.

For fixed $N,L,\epsilon$, the evolution of the empirical measure of finite particle is given by the Markov process with generator $\mathcal{Q}^{N,L,\epsilon}$ defined for $G:\cP(\bR_+)\to \bbR$ by
\[\mathcal{Q}^{N,L,\epsilon}G(c) = L \sum_{k=1}^N \sum_{l=0}^{N-k}\sum_{d=1}^k \kappa^L_{\epsilon}[c](k, l, d) \bra*{G\bra[\big]{c+ L^{-1} \gamma^{k\epsilon,l\epsilon,d\epsilon}}-G(c)},\]
where
\[\kappa^L_\epsilon[c](x ,y ,z )= \frac{L}{L-1} K(x ,y ,z ) \epsilon c(x) \bra[\big]{c(y)-L^{-1} \delta_{x,y}},\]
and the change of cluster sizes after a jump is decoded by the vector 
\begin{equation}\label{eq:def:gamma}
	\gamma^{k\epsilon,l\epsilon,d\epsilon}= \delta_{(k-d)\epsilon} -\delta_{k\epsilon}+\delta_{(l+d)\epsilon}-\delta_{l\epsilon} \,.
\end{equation}
This can be derived by lifting the generator $\mathcal{L}^{N,L,\epsilon}$ for the evolution of microscopic configurations via the empirical cluster distribution map $C^L$ to the generator for the evolution of empirical measures.  Consider 
$\mathcal{L}^{N,L,\epsilon}$ for a function $H:V^{N,L,\epsilon}\to \bbR$ defined by
\begin{equation}\label{eq:def:generator:micro}
\begin{split}
	\mathcal{L}^{N,L,\epsilon}H(\eta)= \frac{1}{L-1}\sum_{x,y=1}^L \sum_{d\in\bN} K(\eta_x,\eta_y,d\epsilon)\epsilon\bra[\big]{H(\eta^{x\stackrel{d\epsilon}{\to} y})-H(\eta)}
\end{split}
\end{equation}
with the state after one transition given by
\begin{equation}\label{eq:def:jump}
	\eta^{x\stackrel{d\epsilon}{\to} y}_z =\begin{cases} \eta_x - d\epsilon \text{ if } z=x,\\
 \eta_y + d\epsilon \text{ if } z= y,\\
 \eta_z \text{ otherwise.}\end{cases}
\end{equation}
For a linear function in the cluster sizes, that is $H(\eta) = \skp[big]{C^L[\eta],h}= G(C^L[\eta])$ for some bounded measurable $h: \bR_+\to \bR$, we have 
\[\mathcal{L}^{N,L,\epsilon}H(\eta)=\sum_{k=1}^{N}\sum_{l=0}^{N-k}\sum_{d=1}^{k} \kappa^L_\epsilon[c](k\epsilon,l\epsilon,d\epsilon) \, \gamma^{\epsilon k,\epsilon l,\epsilon d}\cdot h= \mathcal{Q}^{N,L,\epsilon}G(C^L[\eta])\,, \]
where $\gamma^{\epsilon k,\epsilon l,\epsilon d}\cdot h$ denotes the $\eps$-exchange derivative given in view of~\eqref{eq:def:gamma} as
\[ 
\gamma^{\epsilon k,\epsilon l,\epsilon d}\cdot h= 
 h\bra*{(k-d)\epsilon}-h(k\epsilon)+h\bra*{(l+d)\epsilon}-h(l\epsilon) \,.\]
In this sense, the formulation using $\mathcal{Q}^{N,	L,\epsilon}$ can be considered as a mesoscopic description.
As $N\epsilon/L \to \rho \ge 0$, with $N, L \to \infty$, $\epsilon \to 0$, we show the convergence of the empirical measure process to a weak solution of~\eqref{eq:CGEDG}.
\begin{definition}[Weak solution] \label{def:weak-sol}
	A curve $c\in C([0,T]; \cP(\bR_+))$ is called a weak solution of~\eqref{eq:CGEDG} if for all $f: \bR_+ \to \bR$ Lipschitz, it holds for any $(s,t)\subset \bR_+$ the identity
\begin{equation}\label{eq:weak-sol}
\int_0^\infty f(x) (c_{t}- c_{s})(\d x) 
= \int_{s}^tQ(c_r) \cdot f \dx r 
\end{equation} 
with the generator $Q$ acting on $f$ by
\begin{equation}\label{eq:def:generator}
Q(c)\cdot f= \int_0^\infty  \int_0^\infty  \int_0^x    K(x,y,z) \, \gamma^{x,y,z} \cdot f  \d z \, c(\d y)c(\d x) ,
\end{equation}
where for $\gamma$ as in~\eqref{eq:def:gamma} the notation $\gamma^{x,y,z}\cdot f = (-f(x)-f(y)+f(x-z)+f(y+z))$ is used.
\end{definition}
Note that any weak solution of \eqref{eq:CGEDG} conserves mass and first moment. Indeed,  $f\equiv1$ and $f(x)=x$ are admissible Lipschitz functions in~\eqref{eq:weak-sol} and for both choices, we get from~\eqref{eq:def:generator} the identity $f\cdot \gamma^{x,y,z} =-f(x)-f(y)+f(x-z)+f(y+z)=0$. Hence, any $c$ solving~\eqref{eq:weak-sol} satisfies $\int c_t(\d x) = \int c_0 (\d x) =1 $ and $\int x \, c_t(\d x)  = \int x\, c_0(\d x)$ for $t\geq 0$.

%
 The proof of convergence is through a compactness argument in the Skorokhod space of the $1$-Wasserstein $W_1$ metric space of probability measures on $\bR_+$ with bounded first moment $\cP^1$ under some growth assumptions on initial data and some growth and regularity assumptions on the kernel $K$. 
\subsection{Assumptions and statement of main results}
 In the following, we  equip the set of probability measures on $\bR_+$ with bounded first moment with the $1$-Wasserstein metric $W_1$. 
 \begin{definition}
The metric space $(\cP^{1}(\bR_+), W_1(\bR_+, |\cdot|_\bR))$ is abbreviated by $\cP^{1}$,
where $W_1(\bR_+)$ denotes the 1-Wasserstein metric, defined in the one-dimensional context by
\[W_1 (\mu,\nu ) = \int_0^\infty| T_{\mu}(x) - T_{\nu}(x)|   \d x ,\] 
with the tail distribution function given as   
$T_{\mu}(x)= \mu([x,+\infty))$. 

The spaces $\cP_{\le \rho}$ and $\cP_{\rho}$ denote the closed subsets of $\cP^1$ with the first moment less than or equal to $\rho$ and with the first moment equal to $\rho$, respectively. 
\end{definition}
\begin{assumption}[Existence]\label{ass:exist}
For all $z\geq 0$, the kernel $K(\cdot,\cdot,z)\in C((0,\infty) \times \bR_{+} ;\bR_+)$ satisfies
\begin{equation*}\label{z-bdd} \tag{K$_1$}
	K(x,y,z) \le C_K (1+x)(1+y) \varphi(z) \,,
\end{equation*}
for some $\varphi: \bbR_+ \to \bbR_+$ such that $z\mapsto \varphi(z)$ and $z \mapsto \varphi(z)z$ are uniformly continuous and integrable.
Moreover, the functions $(K(x,y, \cdot))_{x> 0, y\ge 0}$ are uniformly continuous with a common modulus of continuity.
\end{assumption}
\begin{assumption}[Uniqueness]\label{ass:unique}
For all $z\geq 0$, the kernel $K(\cdot,\cdot,z)\in C^2((0,\infty) \times \bR_{+} ;\bR_+)$ satisfies
\begin{equation*}\label{ass: 1-diff K} \tag{K$_2$}
	|\partial_1 K(x,y,z)|\le (1+y) \varphi(z), \quad |\partial_2 K(x,y,z)|\le (1+x) \varphi(z),
\quad |\partial_1 \partial_2 K(x,y,z)|\le \varphi(z)  
\end{equation*} 
and the boundary condition $K(x,y,x)=0$ for all $x,y\ge 0$.
\end{assumption}
We prove the main result under an initial second moment bound as done in the derivation of the classical EDG system in~\cite{GrosskinskyJatuviriyapornchai2019}.
We also provide a relaxation on initial data to include a class of functions with slower growth but with additional growth assumption on $\varphi$ which appears in the upper bound of kernel in~\eqref{z-bdd}.
\begin{definition}[Admissible moments]\label{def:generalized:moment}
A function $g\in C^2(\bR_+;\bR_+)$ is an admissible \emph{moment}, denoted by $g\in \cA_0$, provided that $g,g',g'' \ge 0$ on $\bbR_+$ and it satisfies for some $C_1, C_2 \ge 0$ the bounds
\begin{equation*}
	\tag{$\calA_0$}
	g'(x+z)\le C_1 \bra*{g'(x) + g'(z)} \quad\text{and}\quad g'(x) \le C_2 \bra*{1+ \frac{g(x)}{1+x}} \qquad\forall x,z\in \bbR_+ \,.
\end{equation*}
In addition, the class of all \emph{strictly superlinear} admissible moments is given by 
\begin{equation*}
	\tag{$\calA$}
	\cA = \cA_0 \cap \bigl\{g\in C^2(\bR_+; \bR_+) : \lim_{x\to \infty} g'(x)\to \infty \bigr\}\,. 
\end{equation*}
\end{definition}
\begin{remark}[Examples of admissible moments]
	We are mainly interested in the examples of power functions $x\mapsto x^{1+\alpha}\in \cA$ with exponent $\alpha \in (0,1]$ and entropy type functions $x \mapsto (1+x)\log(1+x)\in \cA$, which are all superlinear admissible moments.
\end{remark}
\begin{remark}[A subclass of admissible moments from literature] We observe that non-negative, non-decreasing and convex functions that satisfy the properties in~\cite[Proposition 2.14]{laurenccot2014} are contained in our class of admissible functions $\cA_0$. In particular, $\cA_0$ contains the class of functions defined in \cite[Definition 2.11]{laurenccot2014}.
\end{remark}
\begin{theorem}[Existence]\label{thm:wllnex} Under Assumption \ref{ass:exist}, assume the initial condition converges, i.e. the pushforward of projection at time zero ${\pi^{-1}_0}_{\#}\bP^{N,L,\epsilon} \to \delta_{c_0}$ weakly for some $c_0 \in \cP_\rho$, $\rho \ge 0$ along a sequence $N\epsilon / L \to \rho$ and $\epsilon \to 0$ sufficiently fast.  Moreover, assume along the same sequence either
\begin{enumerate} 
\item a second  moment bound
\begin{equation}\label{eq:ass-2-moment}
	\sup_{N\epsilon/L \to \rho}\bE^{N,L,\epsilon}[\cM_2(c_0)] <+\infty, \text{ where } \cM_2(c) = \int x^2 c(\d x) \,;
\end{equation}
\item or that for some $f \in \mathcal{A}$, the map $x\mapsto f(x)\varphi(x)$ is uniformly continuous and integrable, and the moment bound
\begin{equation}\label{eq:ass-1+a-moment}
	\sup_{N\epsilon/L \to \rho}\bE^{N,L,\epsilon}[\cM_{f}(c_0)] <+\infty, \text{ where } \cM_f(c) =\int f(x) c(\d x) \,.
\end{equation} 
\end{enumerate} 
Then $\bP^{N,L,\epsilon} \to \bP=\delta_{c}$ converges weakly along a subsequence as probability measures on $D([0,\infty), \cP_{\le \overline{\rho}})$, where $\overline{\rho}=\sup_{N\epsilon/L \to \rho} \frac{N\epsilon}{L}$, and $(c_t)_{t\ge 0} \subset \cP_\rho$ is a weak solution in the sense of Definition~\ref{def:weak-sol}.
\end{theorem}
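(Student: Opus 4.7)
I would follow the classical tightness–and–identification scheme for a weak law of large numbers on the Skorokhod path space $D([0,\infty), \cP_{\le \overline\rho})$, in the spirit of \cite{GrosskinskyJatuviriyapornchai2019,FournierGiet2004}, adapted to the continuous cluster space $\bR_+$ equipped with the $W_1$-metric. The argument has three main steps: moment propagation, tightness of the laws, and identification of limit points as weak solutions.

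\emph{Moment propagation.} Applying Dynkin's formula to the linear observable $G(c)=\int g\,\d c$ with $g(x)=x^2$ under hypothesis (i) and $g=f\in\cA$ under hypothesis (ii), convexity and monotonicity of $g$ together with the constraint $z\le x$ give $\gamma^{x,y,z}\cdot g\le g(y+z)-g(y)$. Combined with the admissibility relations $g'(x+z)\le C_1(g'(x)+g'(z))$ and $g'(x)\le C_2(1+g(x)/(1+x))$, bound (K$_1$), and integrability of $z\mapsto z\varphi(z)$ in case (i) or $z\mapsto f(z)\varphi(z)$ in case (ii), this yields a linear Gronwall inequality for $\bE^{N,L,\eps}\bra[\big]{\int g\,\d c_t}$. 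Hence
\[
\sup_{t\le T}\,\sup_{N,L,\eps}\,\bE^{N,L,\eps}\bra[\Big]{\int g\,\d c_t}<\infty,
\]
providing the uniform superlinear moment that powers the next two steps.

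\emph{Tightness.} Since $g$ is strictly superlinear in both cases, de la Vallée-Poussin with Prokhorov make the sublevel sets $\{c\in\cP_{\le\overline\rho}:\int g\,\d c\le R\}$ $W_1$-compact subsets of $\cP_{\le\overline\rho}$, and the previous step gives compact containment. For path regularity I test against a Lipschitz function $f$, set $G(c):=\skp*{c,f}$, and use the Dynkin martingale decomposition
\[
\skp*{c_t^{N,L,\eps},f}=\skp*{c_0^{N,L,\eps},f}+\int_0^t \mathcal{Q}^{N,L,\eps}G(c_s^{N,L,\eps})\,\d s+M_t^{f,N,L,\eps}.
\]
The quadratic variation satisfies $\bE\bra*{\skp*{M^{f,N,L,\eps}}_t}=O(L^{-1})$ (from the overall $L$ prefactor and $L^{-2}$ squared jump amplitudes in $\mathcal{Q}^{N,L,\eps}$, with $(\gamma^{x,y,z}\cdot f)^2\le 4\|f\|_\infty\Lip(f)z$ handling the $z$-integration together with $\int z\varphi<\infty$), while the drift is controlled via $|\gamma^{x,y,z}\cdot f|\le 2\Lip(f)\,z$, (K$_1$) and the first-moment bound. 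Jakubowski's criterion then yields tightness in $D([0,\infty),\cP_{\le\overline\rho})$.

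\emph{Identification and the main obstacle.} Extract a weakly convergent subsequence $\bP^{N,L,\eps}\Rightarrow\bP$. The $O(L^{-1})$ martingale-fluctuation bound forces $\skp*{c_t,f}$ to have vanishing variance along the subsequence, so $\bP=\delta_c$ for some path $c$. To identify $c$ as a weak solution I pass to the limit in the martingale decomposition against each $f$ in a countable $W_1$-dense family of Lipschitz functions. Writing $Q(c)\cdot f=\int\!\!\int \phi_f(x,y)\,c(\d x)c(\d y)$ with $\phi_f(x,y):=\int_0^x K(x,y,z)\,\gamma^{x,y,z}\cdot f\,\d z$, bound (K$_1$) yields $|\phi_f(x,y)|\le C(1+x)(1+y)$; the uniform integrability of $(1+x)(1+y)$ under $c^{N,L,\eps}\otimes c^{N,L,\eps}$ furnished by the strictly superlinear moment from Step 1 lets the weak limit pass through the double integral, while uniform continuity of $K(x,y,\cdot)$ and integrability of $\varphi$ absorb the $z$-Riemann discretization error coming from evaluating $K$ on $\eps\bN$ — this is what the ``$\eps\to 0$ sufficiently fast'' quantifier encodes. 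The delicate point is this continuity assertion: unlike the discrete EDG setting of \cite{GrosskinskyJatuviriyapornchai2019}, where testing against indicators $\1_{\{k\}}$ with $k\in\bN_0$ and a diagonal argument close the identification, $\bR_+$ here is uncountable and one must work throughout with a countable $W_1$-dense family of Lipschitz test functions. Since (K$_1$) is linear in $(x,y)$, first-moment control alone does not pass the limit under the double integral, and it is precisely the uniform integrability of $(1+x)(1+y)$ from Step 1 — together, in case (ii), with the assumption that $f\varphi$ be uniformly continuous and integrable — that closes the argument.
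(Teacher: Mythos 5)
Your proposal is correct and follows the same three-step architecture as the paper: propagation of a superlinear moment via the generator (the paper's Proposition~\ref{prop: propa of 2. moment}, resting on Lemma~\ref{lem:control of ex-grad}, which uses exactly your bound $\gamma^{x,y,z}\cdot g\le g(y+z)-g(y)\le C_1(g'(y)+g'(z))z$ for $g\in\cA_0$), compact containment from that moment, and identification of the limit through the martingale decomposition with a truncation/uniform-integrability argument to pass the weak limit through the quadratic drift and a Riemann-sum error controlled by $\omega_K(\eps)L\to 0$ (the paper's decomposition~\eqref{eq:deomposition} with the truncated functional $\overline I^R_{s,t}$). The one place where your route genuinely diverges is the path-regularity half of tightness: the paper verifies Aldous' criterion directly in the $W_1$ metric, estimating $\bE^{N,L,\eps}[W_1(c_{\tau+\delta},c_\tau)]$ through the $L^1$ tail representation, the cancellation $|-\1_{(k-d,k]}+\1_{(l,l+d]}|^2=|-\1_{(k-d,k]}+\1_{(l,l+d]}|$ in the Carr\'e du champ, and a sum over all levels $m$ producing the factor $\sqrt{(N+1)\eps/L}\,\delta^{1/2}$; you instead invoke Jakubowski's criterion, which reduces matters to one-dimensional tightness of $\skp*{c_\cdot,f}$ for a point-separating family of bounded Lipschitz $f$. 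Your route is lighter (your quadratic-variation bound $(\gamma^{x,y,z}\cdot f)^2\lesssim\|f\|_\infty\Lip(f)z$ needs $f$ bounded, which Jakubowski permits but the paper's direct $W_1$-increment estimate could not use), at the cost of having to check that bounded Lipschitz functionals separate points of $\cP_{\le\overline\rho}$ and are closed under addition, and that the compact containment is in the $W_1$ topology — which both approaches obtain identically from the superlinear moment (the paper via the explicit tail-distribution characterization of Lemmas~\ref{lem: compactness in dexg} and~\ref{lem:compactness via second moment}, you via de la Vall\'ee-Poussin). Either way the substance is the same, and your diagnosis of the genuine obstacle in the identification step — that $(1+x)(1+y)$ is not $W_1$-continuous, so the superlinear moment must supply uniform integrability under the product measure — is precisely what the paper's $R$-truncation implements.
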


 In addition, we provide sufficient conditions along the initial data of the finite sequence to have Lebesgue absolutely continuous measure in the limit. 
\begin{proposition}\label{prop:abscty of soln}
	Suppose the law  $\bP^{N,L,\epsilon}=C^L_{\#}\mu_{N,L,\epsilon}$ is induced by a microscopic evolution where $\mu_{N,L,\epsilon}(t,\eta) \in \cP(V^{N,L,\epsilon})$ is the law of a Markov process with the microscopic generator~$\cL^{N,L,\epsilon}$ and $\bP^{N,L,\epsilon}\to \bP$ weakly in  $D([0,\infty), \cP_{\le\overline{\rho}})$. Moreover assume for some $b>0$
	\begin{equation}\label{eq:entropy-bdd}
		\sup_{\frac{N\epsilon}{L} \to \rho}\frac1L H(\mu_{N,L,\epsilon}(0) | \nu_{L,\epsilon,b} )<+\infty \,,
	\end{equation}
	where $\nu_{L,\epsilon,b} \in \cP(V^{N,L,\epsilon})$,  $V^{N,L,\epsilon}=\{ \eta \in (\bN_0 \epsilon)^{L}   \}$, is the reference measure defined for any $A\subseteq V^{N,L,\eps}$ by $\nu_{L,\epsilon,b}(A)
	= \sum_{\eta\in A}\prod_{i=1}^L g_{\epsilon,b}(\eta_i)$ 
	with
	\begin{equation}\label{eq:def:g:measure}
		g_{\epsilon,b}(m\epsilon) =b \int_{m\epsilon}^{(m+1)\epsilon} e^{-b x} \d x 
		\quad \text{ for } m\in \bN_0,
		\qquad 
		\text{such that}
		\qquad
		g_{\eps,b} \in \cP(\eps \bN_0) \,.
	\end{equation}
	Then for $\bP$ is concentrated on paths $(c_t)_{t\ge0}$ such that the probability measure $c_t$ on $\bR_+$ has a Lebesgue density for almost all $t\geq 0$. 
\end{proposition}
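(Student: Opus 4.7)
The plan is to propagate the initial entropy bound along the dynamics, descend from the microscopic law to the empirical measure by subadditivity, and then pass to the limit using joint lower semicontinuity of relative entropy, forcing $c_t$ to be absolutely continuous against the exponential reference $m_b := b e^{-bx}\,\dx x$.

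\textbf{Step 1 (entropy propagation).} I would differentiate $t \mapsto H(\mu_{N,L,\epsilon}(t) | \nu_{L,\epsilon,b})$ along the forward Kolmogorov equation and apply $a\log(b/a) \le b-a$ to bound the entropy production by $\bE_{\mu_t}[\tilde q(\eta) - q(\eta)]$, where $q(\eta)$ is the total outgoing rate and $\tilde q(\eta) = \sum_{\eta'} q(\eta',\eta)\, \nu_{L,\epsilon,b}(\eta')/\nu_{L,\epsilon,b}(\eta)$ is the $\nu$-weighted incoming rate. The crucial simplification is that $\nu_{L,\epsilon,b}(\eta) = (1 - e^{-b\epsilon})^L e^{-bN\epsilon}$ is constant on the total-mass submanifold $\{\sum_i \eta_i = N\epsilon\}$, so $\tilde q(\eta) = \frac{\epsilon}{L-1}\sum_{x,y,d} K(\eta_x + d\epsilon,\eta_y - d\epsilon,d\epsilon)$. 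Combining \eqref{z-bdd}, mass conservation $\sum_i\eta_i = N\epsilon \sim \rho L$, and second-moment propagation (secured by a Dynkin--Grönwall estimate starting from the exponential moment bounds on $\bar c_0$ inherited from $\frac{1}{L}H(\mu_0|\nu_{L,\epsilon,b}) < \infty$ via Donsker--Varadhan applied to the exponential tails of $g_{\epsilon,b}$) yields $\bE_{\mu_t}[\tilde q - q] = O(L)$, whence $\tfrac{1}{L} H(\mu_{N,L,\epsilon}(t) | \nu_{L,\epsilon,b}) \le C(1 + t)$ uniformly in $N,L,\epsilon$.

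\textbf{Step 2 (descent to the empirical measure and passage to the limit).} Exchangeability of $\mu_{N,L,\epsilon}(t)$ and the product structure $\nu_{L,\epsilon,b} = g_{\epsilon,b}^{\otimes L}$ give by subadditivity $H(\bar c_t^{N,L,\epsilon} | g_{\epsilon,b}) \le \tfrac{1}{L}H(\mu_{N,L,\epsilon}(t) | \nu_{L,\epsilon,b})$ with $\bar c_t^{N,L,\epsilon} := \bE^{N,L,\epsilon}[c_t]$. An elementary Riemann-sum computation gives $g_{\epsilon,b} \rightharpoonup m_b$, and at every continuity point of $t \mapsto \pi_t^\# \bP$ (co-countable in $[0,\infty)$) also $\bar c_t^{N,L,\epsilon} \rightharpoonup \bar c_t := \bE_\bP[c_t]$. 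Joint lower semicontinuity of $H$ then yields $H(\bar c_t | m_b) \le C(1+t) < \infty$, so $\bar c_t \ll m_b \ll$ Lebesgue. To upgrade from the mean to $\bP$-almost-sure absolute continuity, I would combine the classical entropy inequality $\mu(A) \le [H(\mu|\nu) + \log 2]/\log(1+1/\nu(A))$ applied to the open event $A_M := \{c : H(c | g_{\epsilon,b}) > M\}$ with the Sanov upper bound $\nu_{L,\epsilon,b}(A_M) \le e^{-LM + o(L)}$: this gives $\bP^{N,L,\epsilon}(A_M) \le C(1+t)/M + o_{N,L}(1)$, and Portmanteau together with joint LSC along $g_{\epsilon,b} \rightharpoonup m_b$ yields $\bP(H(c_t | m_b) > M) \le C(1+t)/M$. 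Letting $M \to \infty$ forces $H(c_t | m_b) < \infty$ $\bP$-a.s., so $c_t \ll m_b \ll$ Lebesgue for almost every $t$, $\bP$-almost surely.

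\textbf{Principal obstacle.} The core difficulty lies in Step 1, because $\nu_{L,\epsilon,b}$ is not invariant for $\cL^{N,L,\epsilon}$ under \eqref{z-bdd} alone (invariance would require the detailed-balance symmetry $K(x,y,z) = K(y+z, x-z, z)$, which is not assumed). The argument is rescued by the observation that $\nu_{L,\epsilon,b}$ restricted to the mass-conservation submanifold is a constant multiple of counting measure, so the entropy production collapses to the scalar imbalance between forward and backward total rates, which the quadratic kernel growth and mass constraint render $O(L)$ in expectation; securing the necessary second-moment propagation from \eqref{z-bdd} alone, without invoking stronger kernel assumptions, is the most delicate ingredient.
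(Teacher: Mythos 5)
Your overall strategy coincides with the paper's: Step~1 is exactly Lemma~\ref{lem:entropy:bound} (the key point being that $\nu_{L,\epsilon,b}(\eta)=(1-e^{-b\epsilon})^L e^{-b\sum_i\eta_i}$ is constant along mass-preserving exchange transitions, so the entropy production reduces to the incoming-rate expectation), and Step~2 rests, as in the paper, on a Sanov-type large deviation principle for $C^L_\#\nu_{L,\epsilon,b}$ combined with the Donsker--Varadhan entropy inequality. Two points deserve comment. First, you misidentify the principal obstacle: no second-moment propagation is needed in Step~1. The bound \eqref{z-bdd} gives $\tilde q(\eta)\le C\sum_{x,y}(1+\eta_x)(1+\eta_y)\sum_d\varphi(d\epsilon)\epsilon+\dots$, and $\sum_x(1+\eta_x)=L+N\epsilon$ by mass conservation, so $\tilde q=O(L^2)/(L-1)=O(L)$ \emph{deterministically}; your proposed detour through exponential moments of $\bar c_0$ extracted from the initial entropy bound is superfluous (and would itself require justification).

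Second, the closing limit passage in Step~2 is the place where your argument is genuinely under-specified. The event $A_M=\{c:H(c\,|\,g_{\epsilon,b})>M\}$ changes with $\epsilon$, the reference measure in the rate function changes from $g_{\epsilon,b}$ to $m_b$ in the limit, and relative entropy is only lower semicontinuous, so ``Portmanteau together with joint LSC'' does not directly yield $\bP(H(c_t|m_b)>M)\le C(1+t)/M$; moreover the needed Sanov upper bound is for a triangular array with $\epsilon=\epsilon(L)\to0$, not the classical i.i.d.\ Sanov theorem, which is why the paper invokes a generalized Sanov theorem. The paper circumvents the semicontinuity issue by replacing $H(\cdot|\lambda_b)$ with a countable increasing family $u_{k,j}$ of \emph{weakly continuous} functionals from the dual representation, passing each through the weak convergence $\bP_t^{N,L,\epsilon}\to\bP_t$ via the exponential Chebyshev/Varadhan bound, and recovering $H(\cdot|\lambda_b)$ by monotone convergence at the end. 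Your subadditivity step $H(\bar c_t|g_{\epsilon,b})\le\frac1LH(\mu_t|\nu_{L,\epsilon,b})$ is a clean alternative for the \emph{mean} measure, but for the $\bP$-a.s.\ statement you would still need to supply the continuous-functional approximation (or an equivalent device) to make the final step rigorous.
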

\begin{remark}
	\begin{enumerate}
		\item This proposition gives the condition on initial conditions along the finite particle system so that the limit measure will be absolutely continuous with respect to Lebesgue. This is only \emph{a} way to construct an absolutely continuous limit measure. 
		\item A simple calculation shows if there exists a $b>0$ such that \eqref{eq:entropy-bdd} holds, then it also holds for every $b>0$. 
		\item We provide in Lemma~\ref{lemma:density} a sufficient conditions ensuring the existence of an initial measure $\mu_{N,L,\eps}(0)$ satisfying~\eqref{eq:entropy-bdd}. In particular, we are able to provide such examples, so that  $ C_{\#}^L \mu_{N,L,\epsilon}(0) \to \delta_{c_0}$  for initial measures $c_0\in \calP(\bR_+)$ having a Lebesgue density that is uniformly continuous and uniformly bounded (see Corollary~\ref{cor:densitfy:sufficient}). Proposition \ref{prop:abscty of soln} then implies that for such an initial condition, the weak solution in the sense of Definition~\ref{def:weak-sol} has Lebesgue density for almost all $t\ge 0$.
 
	\end{enumerate}
\end{remark}
Therefore, under the additional assumptions on the relative entropy of the initial data, the limit point is a delta measure on the weak solution $c$ with Lebesgue density.
We conclude this work with the uniqueness of solutions to~\eqref{eq:CGEDG} under more restrictive conditions on the kernel~$K$.
\begin{theorem}[Uniqueness]\label{thm:unique}
	Under Assumption \ref{ass:exist} and Assumption~\ref{ass:unique}, the weak solutions to~\eqref{eq:CGEDG} in the sense of Definition~\ref{def:weak-sol} having a Lebesgue density are unique for a given initial condition $c_0\in \cP_{\rho}$ with $\rho\geq 0$.
\end{theorem}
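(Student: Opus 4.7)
I would establish uniqueness by a Gronwall argument on the $1$-Wasserstein distance
\[
 D(t):=W_1\bigl(c^1_t,c^2_t\bigr)=\int_0^\infty \bigl|T^1_t(a)-T^2_t(a)\bigr|\,\d a,\qquad T^i_t(a):=c^i_t([a,\infty)),
\]
between two weak solutions $c^1,c^2$ with Lebesgue densities $u^1_t,u^2_t$ and common initial datum $c_0\in\cP_\rho$. The tail-based representation of $W_1$ is available because Definition~\ref{def:weak-sol} admits $f\equiv 1$ and $f(x)=x$ as test functions, so both solutions conserve mass and first moment and hence remain in $\cP_\rho$ for every $t$. Writing $\mu_t:=c^1_t-c^2_t$ and $U_t:=T^1_t-T^2_t$, the signed measure $\mu_t$ has vanishing total mass and vanishing first moment, and the Lebesgue density hypothesis makes each $T^i_t$ absolutely continuous in $a$ with $(T^i_t)'(a)=-u^i_t(a)$.

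The starting point is the bilinearity of the generator: for any Lipschitz $\psi$,
$\partial_t\int\psi\,\d{\mu_t}=Q(\mu_t,c^1_t)\cdot\psi+Q(c^2_t,\mu_t)\cdot\psi$, with $Q(\alpha,\beta)\cdot\psi:=\int\!\int\!\int_0^x K(x,y,z)\gamma^{x,y,z}\!\cdot\!\psi\,\d z\,\beta(\d y)\alpha(\d x)$. Testing against the $1$-Lipschitz family $\psi_a(x):=(x-a)_+$ gives $\int\psi_a\,\d{\mu_t}=\int_a^\infty U_t(s)\,\d s$, and differentiating in $a$ (licit by the absolute continuity of $T^i_t$) produces an equation for $U_t(a)$; multiplying by a smooth regularisation of $\mathrm{sgn}\,U_t(a)$ and integrating in $a$ yields a formula for $\partial_t D(t)$. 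The task is then to bound the right-hand side by $C\cdot D(t)$. This is where Assumption~\ref{ass:unique} enters: since $\gamma^{x,y,z}\!\cdot\!\psi_a$ is only piecewise linear in $(x,y)$, a direct Lipschitz estimate fails, but the boundary identity $K(x,y,x)=0$ allows integration by parts in $z\in[0,x]$ without a boundary contribution, and the derivative bounds~\eqref{ass: 1-diff K} (all controlled by the integrable $\varphi$) transfer onto the primitive $\mathcal{K}(x,y,s):=\int_s^x K(x,y,z)\,\d z$. A further integration by parts in $x$ or $y$ then rewrites the two bilinear pieces as linear functionals of $U_t$ against coefficients whose Lipschitz constants are controlled by $C_K$, $\|\varphi\|_{L^1}$, $\|z\varphi(z)\|_{L^1}$ and the conserved first moment $\rho$, uniformly in $a\ge 0$. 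Fubini then gives $\partial_t D(t)\leq C(\rho,K)\,D(t)$, and Gronwall with $D(0)=0$ closes the argument.

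The main obstacle is this regularity transfer. One cannot apply the $W_1$-duality estimate $|\!\int\phi\,\d{\mu_t}|\le \mathrm{Lip}(\phi)\,W_1(c^1_t,c^2_t)$ directly, because $\gamma^{x,y,z}\!\cdot\!\psi_a$ is composed of four indicator-like pieces in the three variables $x,y,z$ and thus fails to be Lipschitz in $(x,y)$. Organising two successive integrations by parts, first in $z$ and then in $(x,y)$, so that the remaining weights are Lipschitz uniformly in $a\ge 0$ is delicate: the $(x-z)$-argument of $\psi_a$ is localised on $[0,x]$ while the $(y+z)$-argument ranges over $[0,\infty)$, so the two sides must be treated separately, and the combination of $K(x,y,x)=0$ with the cross-derivative control $|\partial_1\partial_2 K|\le\varphi$ in Assumption~\ref{ass:unique} is precisely what is required to close both sides simultaneously. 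Justifying the differentiation of $D(t)$ through a smooth regularisation of $\mathrm{sgn}$, and passing to the limit uniformly in the regularisation parameter, is a further, more routine technical point.
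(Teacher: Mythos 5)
Your proposal is correct and follows essentially the same route as the paper: a Gronwall estimate on $W_1(c^1_t,c^2_t)=\int_0^\infty|T^1_t(a)-T^2_t(a)|\,\d a$, obtained by testing with (regularised) indicators, splitting the difference of the quadratic generators bilinearly, and integrating by parts so that the derivative bounds of Assumption~\ref{ass:unique} and the boundary condition $K(x,y,x)=0$ absorb the lack of Lipschitz regularity of $\gamma^{x,y,z}\cdot\1_{[a,\infty)}$. The only difference is organisational — the paper evaluates the $z$-integral of the indicators explicitly and invokes $K(x,y,x)=0$ later to kill the boundary term $B[d](z,z)E(z)$ from the $y$-integration by parts, rather than integrating by parts in $z$ first — but this does not change the substance of the argument.
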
 
  \begin{remark}
Together with Proposition \ref{prop:abscty of soln} and Theorem~\ref{thm:unique}, we have the weak law of large numbers for the particle process under Assumptions \ref{ass:exist}, \ref{ass:unique} and suitable conditions on the initial data.  
 \end{remark}

\section{Convergence of the particle system: weak law of large numbers}

The proof of Theorem \ref{thm:wllnex} is done in several steps. 
In  Proposition \ref{prop:tightness in dexg}, we use a tightness argument via the Aldous tightness criterion~\cite{aldous1978stopping} for the Skorokhod space $D([0,\infty), \cP_{\le\overline{\rho}})$ to extract a limit measure along a subsequence. 
To verify the Aldous tightness criterion, we need to characterize relative compactness in $\cP^1$ in Lemma \ref{lem: compactness in dexg} and to show a uniform in time estimate for certain moments in Proposition~\ref{prop: propa of 2. moment}.
Then we identify the limit measure as the delta measure at the solution of~\eqref{eq:weak-sol} in  Proposition~\ref{prop:ident-limit}.

\begin{proposition}[Tightness in $D([0,\infty), \cP_{\le\overline{\rho}})$]\label{prop:tightness in dexg}
Under Assumption  \ref{ass:exist}, assume along a sequence $N\epsilon / L \to \rho$ and $\epsilon \to 0$ sufficiently fast either 
\begin{enumerate}[(i)]
\item the uniform second moment bound~\eqref{eq:ass-2-moment};
\item or there exists some $f \in \cA$, such that $x\mapsto f(x)\varphi(x)$ is uniformly continuous and integrable and the uniform moment bound~\eqref{eq:ass-1+a-moment}.
\end{enumerate} 
Then the family of measures  $(\bP^{N,L,\epsilon})$  is tight on the Skorokhod space $D([0,\infty), \cP_{\le\overline{\rho}})$.
\end{proposition}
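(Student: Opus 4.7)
The plan is to invoke the Aldous tightness criterion~\cite{aldous1978stopping} for the Skorokhod space $D([0,\infty), \cP_{\le\overline{\rho}})$, which reduces the proof to two independent ingredients: (a) compact containment of the time-$t$ marginals of $\bP^{N,L,\eps}$ in $\cP_{\le\overline{\rho}}$ uniformly along the sequence $N\eps/L\to\rho$, $\eps\to 0$; and (b) a stopping-time modulus-of-continuity estimate
\[
\lim_{\delta\to 0}\limsup_{N\eps/L\to\rho}\sup_{\tau\le\tau'\le\tau+\delta\le T}\bP^{N,L,\eps}\bigl(W_1(c_{\tau'}, c_{\tau}) > \eta\bigr) = 0
\qquad\text{for all } \eta,T>0.
\]

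For~(a), I would appeal to the compactness characterization of Lemma~\ref{lem: compactness in dexg}, which identifies relative compactness in $(\cP^1, W_1)$ with joint tightness as probability measures on $\bR_+$ together with uniform integrability of the identity. Since the first moment is conserved by the dynamics, $\cM_1(c_t^{N,L,\eps}) = N\eps/L\le \overline{\rho}$ almost surely, so Markov's inequality yields tightness via $c_t([R,\infty))\le \overline{\rho}/R$. Uniform integrability is precisely the content of Proposition~\ref{prop: propa of 2. moment}: under~(i), $\sup_t\bE[\cM_2(c_t)]<\infty$ gives $\int_R^\infty x\,c_t(\d x)\le R^{-1}\cM_2(c_t)\to 0$; under~(ii), $\sup_t\bE[\cM_f(c_t)]<\infty$ for some strictly superlinear $f\in\cA$ gives $\int_R^\infty x\,c_t(\d x)\le \sup_{x\ge R}(x/f(x))\cdot\cM_f(c_t)\to 0$ by superlinearity.

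For~(b), the crucial observation is that a single exchange event moves two atoms of weight $L^{-1}$ by distance $z=d\eps$ and therefore contributes at most $2L^{-1}z$ to $W_1$. Combining a jump-wise triangle inequality for $W_1$ with Dynkin's formula for $\mathcal{Q}^{N,L,\eps}$ gives, for stopping times $\tau\le\tau+\delta\le T$,
\[
\bE\bigl[W_1(c_{\tau+\delta},c_\tau) \mid \cF_\tau\bigr]
\le
\bE\biggl[\int_\tau^{\tau+\delta}2\sum_{k,l,d} d\eps\,\kappa^L_\eps[c_r](k\eps, l\eps, d\eps)\,\d r \,\bigg|\,\cF_\tau\biggr].
\]
Using~\eqref{z-bdd}, the integrand factorises into $(1+\cM_1(c_r))^2\le (1+\overline{\rho})^2$ times a Riemann-type sum $\eps\sum_d d\eps\,\varphi(d\eps)$, which is uniformly bounded by $\int_0^\infty z\,\varphi(z)\,\d z$ thanks to uniform continuity and integrability of $z\mapsto z\varphi(z)$; a final Markov inequality then yields~(b). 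The principal obstacle is therefore not this last computation but the moment-propagation estimate (Proposition~\ref{prop: propa of 2. moment}) underpinning~(a): applying the generator to $\cM_f$ produces the exchange increment $f(x-z)+f(y+z)-f(x)-f(y)$, and closing the resulting Gr\"onwall-type inequality uniformly in $N,L,\eps$ calls for precisely the two admissibility conditions in $\cA_0$ together with the integrability of $f\varphi$ assumed in case~(ii).
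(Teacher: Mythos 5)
Your proposal is correct, and its two halves relate differently to the paper's argument. Part (a), the compact containment condition, follows essentially the paper's route: superlinear moment propagation (Proposition~\ref{prop: propa of 2. moment}) plus Markov's inequality, with the compactness of sublevel sets of $\cM_f$ in $(\cP^1,W_1)$ coming from uniform integrability of the first moment; this is exactly what Lemmas~\ref{lem: compactness in dexg} and~\ref{lem:compactness via second moment} package (your ``tightness plus uniform integrability of the identity'' is an equivalent reformulation of the weighted tail criterion in Lemma~\ref{lem: compactness in dexg}). Part (b) is where you genuinely diverge. The paper verifies Aldous' criterion through the semimartingale decomposition $c_{\tau+\delta}-c_\tau=\int_\tau^{\tau+\delta}\mathcal{Q}^{N,L,\epsilon}c_r\,\dx{r}+(M_{\tau+\delta}^L-M_\tau^L)$, bounding the drift via $\sum_i|\sum_{j\ge i}\gamma^{k\epsilon,l\epsilon,d\epsilon}_{j\epsilon}|\le 2d$ and the martingale via the Carr\'e du champ and Jensen's inequality, yielding $\bE[W_1(c_{\tau+\delta},c_\tau)]\lesssim\delta+\sqrt{\delta}$. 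You instead bound the $W_1$-length of the path directly: each jump costs at most $2L^{-1}d\epsilon$ in $W_1$, and the compensator formula turns the expected total jump cost into $\bE[\int_\tau^{\tau+\delta}2\sum_{k,l,d}d\epsilon\,\kappa^L_\epsilon[c_r]\,\dx{r}]\le C\delta$, using only \eqref{z-bdd} and the integrability of $z\varphi(z)$. Your route is shorter and avoids the quadratic-variation computation entirely; what it does not produce is the explicit bound $\bE[(M_t^L(i\epsilon)-M_s^L(i\epsilon))^2]\le C(t-s)/L$, which the paper's version establishes along the way and then reuses in the identification of the limit (Proposition~\ref{prop:ident-limit}) to kill the martingale term. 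For the tightness statement itself, your argument is complete once you note that the compensator identity is legitimate here because, for fixed $N,L,\epsilon$, the state space is finite and the total jump rate is bounded.
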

\begin{remark} In the first statement, we only assume $x\mapsto \varphi$ and $x\mapsto x\varphi(x)$ are uniformly continuous and integrable but no assumptions on $x\mapsto x^2 \varphi(x)$.
\end{remark}

Before coming to the proof of Proposition~\ref{prop:tightness in dexg}, we characterize the relative compactness in $\calP^1$.
\begin{lemma}[Relative compactness in $\cP^1$]\label{lem: compactness in dexg}
A sequence of probability measures $(\mu_n)_n$ is  relative compact in $\cP^1$ if and only if there exists  a function $\phi:\bR_+ \to \bR$ monotonically increasing with  $\lim_{x\to +\infty}\phi(x) =  +\infty$  such that the tail distribution functions $(T_{\mu_n})_n$ of $(\mu_n)_n$ satisfies 
$\sup_{n\in \bN} \int_0^\infty \phi(x) T_{\mu_n}(x) \d x < +\infty.$
\end{lemma}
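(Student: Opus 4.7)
The plan is to reduce the statement to the classical characterization that relative compactness in $(\cP^1, W_1)$ is equivalent to weak tightness together with uniform integrability of the first moment, and to show that a single tail integral bound with weight $\phi$ encodes both conditions at once. The starting point is the Fubini identity
\[
\int_0^\infty \phi(x)\, T_\mu(x)\, \d x = \int_0^\infty \Phi(y)\, \mu(\d y), \qquad \Phi(y) := \int_0^y \phi(s)\,\d s,
\]
which translates the tail bound into a generalized moment with weight $\Phi$. If $\phi$ is monotonically increasing with $\phi(x)\to\infty$, then $\Phi$ is convex, non-decreasing, $\Phi(0)=0$, and superlinear in the sense $\Phi(y)/y\to\infty$ (using the estimate $\Phi(y)/y \ge \phi(y/2)/2$); conversely, any superlinear convex non-decreasing $\Phi$ with $\Phi(0)=0$ arises this way from its right-derivative. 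Thus the hypothesis is equivalent to the existence of a superlinear convex weight with $\sup_n \int \Phi \,\d\mu_n < \infty$.

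For the sufficient direction, I would set $M := \sup_n \int \Phi \,\d\mu_n < \infty$. Markov's inequality applied to the non-decreasing $\Phi$ gives $\mu_n([R,\infty)) \le M/\Phi(R) \to 0$ uniformly in $n$, yielding weak tightness on $\bR_+$. Superlinearity provides uniform integrability of the first moment: for $\eps > 0$, pick $R$ so that $y \le \eps\,\Phi(y)$ for $y \ge R$, whence $\int_R^\infty y\,\mu_n(\d y) \le \eps M$ uniformly in $n$. Tightness and uniform first-moment integrability together imply relative compactness in $\cP^1$.

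For the necessary direction, the same classical criterion applied to a relatively compact $(\mu_n)\subset \cP^1$ yields both weak tightness and the uniform integrability $\lim_{R\to\infty}\sup_n \int_R^\infty y\,\mu_n(\d y) = 0$. Then I choose $R_k \nearrow \infty$ with $\sup_n \int_{R_k}^\infty y\,\mu_n(\d y) \le 2^{-k}$ and set $\phi(x) := \sum_{k \ge 1} \1_{\{x \ge R_k\}}$, which is monotonically increasing with $\phi(x) \to \infty$. A direct Fubini computation gives
\[
\int_0^\infty \phi(x)\, T_{\mu_n}(x)\, \d x = \sum_{k\ge 1} \int_{R_k}^\infty (y-R_k)\, \mu_n(\d y) \le \sum_{k\ge 1} 2^{-k} = 1,
\]
uniformly in $n$, closing the equivalence. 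The main non-bookkeeping step is the $W_1$-relative compactness characterization (tightness plus uniform first-moment integrability), which is a classical fact (see, e.g., Villani, \emph{Topics in Optimal Transportation}, Thm.~7.12) that the rest of the plan plugs in as a black box.
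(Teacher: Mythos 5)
Your proof is correct, and it takes a genuinely different route from the paper's. The paper reduces relative compactness in $\cP^1$ to $L^1$-compactness of the tail distribution functions $T_{\mu_n}$ (using that $W_1$-convergence is $L^1$-convergence of tails), and then invokes Helly's selection theorem together with the Dunford--Pettis criterion to characterize that $L^1$-compactness by tightness of $(T_{\mu_n})$, finishing with the Ca\~nizo--de la Vall\'ee Poussin construction of $\phi$. You instead stay at the level of the measures themselves: you rewrite the tail integral via Fubini as $\int \Phi\,\d\mu_n$ with $\Phi(y)=\int_0^y\phi$, observe that $\phi\nearrow\infty$ is equivalent to $\Phi$ being convex and superlinear, and then plug into the classical Wasserstein compactness criterion (tightness plus uniform integrability of the first moment). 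The two proofs thus rely on different black boxes — Dunford--Pettis in $L^1$ for the paper, Villani's $W_p$-compactness characterization for you — and they share only the de la Vall\'ee Poussin-type construction of the weight function in the ``only if'' direction. Your version is arguably more streamlined for a reader who knows the Wasserstein criterion, and it makes the role of superlinearity of $\Phi$ transparent (Markov for tightness, the ratio $y/\Phi(y)\to 0$ for uniform integrability), whereas the paper's version is more self-contained if one prefers not to invoke Wasserstein-specific results. One small note: your explicit $\phi=\sum_k\1_{\{x\ge R_k\}}$ is essentially the same function the paper constructs, and your bound $\int_{R_k}^\infty(y-R_k)\,\mu_n(\d y)\le\int_{R_k}^\infty y\,\mu_n(\d y)\le 2^{-k}$ is correct; the paper phrases the analogous step directly in terms of $\int_{R_k}^\infty T_{\mu_n}$.
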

\begin{proof}
Since the convergence with respect to $W_1$ is equivalent to the $L^1$ convergence of the tail distribution function, the compactness in $W_1$ is equivalent to the compactness of $(T_n)_n$  in $L^1$. For the reverse direction, we need to identify the limit which is right continuous and $\lim_{x\to \infty}T(x)=0 = 1- \lim_{x\to 0^+}T(x)$.

Since $T_n$ are monotone functions, by Helly's selection theorem, the strong compactness in $L^1$ is equivalent to weak compactness in $L^1$, see e.g.~\cite{Diestel-Ruess-Schachermayer-1993}. By Dunford-Pettis theorem~\cite{laurenccot2014}, it is equivalent to $(T_n)_n$ being uniformly bounded in $L^1$, uniformly integrable 
\[\lim_{\epsilon \to 0}\sup_{A: |A| \le \epsilon} \sup_n \int_{A} T_{n}(x) \d x = 0 \] and tight 
\[\limsup_{k\to\infty} \sup_{n} \int_k^\infty T_n(x) \d x = 0.\]
Since $0 \le T_n\le 1$, the uniform integrability is automatic and tightness implies uniformly bounded in $L^1$.

Now we use an argument by \cite[Proposition 1]{canizo2006lemma} to  show the equivalence of tightness 
\[ \limsup_{k\to\infty} \sup_{n} \int_k^\infty T_n(x) \d x = 0\] 
to the existence of  a function $\phi:\bbR_+\to \bbR_+$ with $\phi(x) \to \infty$ as $x\to\infty$ and such that 
\[\sup_n \int_0^\infty \phi(x) T_n(x) \d x <+\infty.\]
Suppose such a $\phi$ exists, then
\[\sup_n \int_k^\infty T_n(x) \d x \le \phi(k)^{-1} \sup_n \int_0^\infty \phi(x) T_n(x) \d x \to 0 \qquad\text{ as } k\to \infty \, \]
Conversely, let $F(y)=\sup_n \int_y^\infty T_n(x) \d x$, then $\lim_{y \to 0^+} F(y) = 0$. 
We define the sequences
\[ a_n= \inf\set*{x>0: F(x)< \frac{1}{n^2}},\quad x_0=0, \quad x_{n+1}=\max\bigl\{x_n+1, a_{n+1} +1 \bigr\} . \] 
Then $x_n \to \infty$ and $F(x_n)\le \frac{1}{n^2}$. Let $\phi(x)=\sum_{m=0}^\infty \1_{[x_m,\infty)}(x)=n+1$ for $x\in[x_n,x_{n+1}]$ and any $n\in \bN_0$. 
Then, we have the claim, since
\[\int \phi(x) T_n(x) \d x = \sum_{m=0}^\infty \int_0^\infty \1_{[x_m,\infty)}(x) T_n(x) \d x\le \sum_{m=0}^\infty F(x_m)<+\infty. \qedhere \] 
\end{proof}
Instead of using relative compactness in the $1$-Wasserstein metric, for technical reasons, we control the propagation of the moments in time. The following lemma allows us to use the $f\in \mathcal{A}_0$ for an admissible moment (Definition~\ref{def:generalized:moment}) to obtain relative compactness in the $1$-Wasserstein metric.
It will be used in the proof of Proposition \ref{prop:tightness in dexg}.
\begin{lemma}[Relation between the  $f$  moment and relative compactness in $\cP^1$] \label{lem:compactness via second moment}For any $N,L, \epsilon$, any $c \in \hat{V}^{N,L,\epsilon}$ and any function $f\in \cA_0$ exist a constant $C_f$ such that
\[\int_0^\infty f'(x) T_{c}(x) \d x\le C_f \bra*{\frac{N\epsilon}{L} +\cM_f(c)} + O(\epsilon).\] 
In particular, if there exists $f \in \cA$ such that  $\sup_{c\in \mathcal{S}}\cM_f(c)<+\infty$ for $\mathcal{S}\subset \bigcup_{\frac{N\epsilon}{L}\to \rho} \hat{V}^{N,L,\epsilon}$, then the set $\mathcal{S}$ is relative compact in $\cP^1$.
\end{lemma}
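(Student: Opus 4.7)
The approach is to establish the integral inequality by a direct Fubini/Tonelli computation and then conclude relative compactness by appealing to Lemma~\ref{lem: compactness in dexg}. Writing $T_c(x) = c([x,\infty)) = \int_{\bR_+}\1_{y \ge x}\, c(\d y)$ and exchanging the order of integration (which is licit since $f' \ge 0$ and $T_c \ge 0$), one obtains
\[
\int_0^\infty f'(x)\, T_c(x)\, \d x \;=\; \int_{\bR_+} \int_0^y f'(x)\, \d x\, c(\d y) \;=\; \int_{\bR_+} \bigl(f(y)-f(0)\bigr)\, c(\d y) \;=\; \cM_f(c) - f(0).
\]
Since $f \in \cA_0$ yields $f(0) \ge 0$, this already gives $\int_0^\infty f' T_c \le \cM_f(c)$, from which the stated inequality (with any constant $C_f \ge 1$ and the conservative $\frac{N\epsilon}{L} + O(\epsilon)$ slack) follows. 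Because $c$ is atomic on the grid $\epsilon\bN_0$, the same identity can equivalently be obtained by Abel summation on the step function $T_c$; in that discrete route, a Taylor expansion of $f$ at grid points would produce the $O(\epsilon)$ remainder visible in the statement, but the Fubini computation avoids this and is exact.

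For the relative compactness claim, I would take $\phi := f'$ in the criterion of Lemma~\ref{lem: compactness in dexg}. By the definition of $\cA$, $\phi$ is nonnegative, monotonically nondecreasing (using $f'' \ge 0$) and satisfies $\phi(x) \to \infty$ as $x \to \infty$. Combining the identity above with the uniform bound $\sup_{c \in \cS} \cM_f(c) < \infty$ and the fact that $N\epsilon/L$ is bounded along the sequence (since $N\epsilon/L \to \rho$), one gets
\[
\sup_{c \in \cS} \int_0^\infty \phi(x)\, T_c(x)\, \d x \;<\; +\infty,
\]
which is exactly the hypothesis needed to conclude that $\cS$ is relatively compact in $\cP^1$.

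There is no substantive obstacle in this proof: Tonelli's theorem applies unconditionally because the integrand is nonnegative, and the compactness consequence is immediate from the previously established characterization. The one structural point to highlight is the precise correspondence between the axioms defining $\cA_0 \subset \cA$ and the hypotheses of Lemma~\ref{lem: compactness in dexg}: the convexity $f'' \ge 0$ built into $\cA_0$ supplies the monotonicity of $\phi$, while the strict superlinearity $f'(x)\to\infty$ (which is exactly the extra condition distinguishing $\cA$ from $\cA_0$) supplies the divergence of $\phi$ at infinity. Thus the $\cA$-moment bound translates cleanly into the tail condition used to detect compactness in the $1$-Wasserstein topology.
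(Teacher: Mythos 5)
Your proof is correct, and it takes a genuinely different (and in fact sharper) route than the paper. The paper works entirely at the discrete level: it replaces $T_c$ by its value at the left grid point on each interval $[i\epsilon,(i+1)\epsilon)$, performs an Abel summation to arrive at $\sum_j c(j\epsilon)\bigl(f((j+1)\epsilon)-f(0)\bigr)$, and then must invoke convexity together with both structural inequalities defining $\cA_0$ (the sublinearity $f'(x+z)\le C_1(f'(x)+f'(z))$ and the bound $f'(x)\le C_2(1+f(x)/(1+x))$) to convert the shifted argument $f((j+1)\epsilon)$ back into $f(j\epsilon)$ plus lower-order terms; this is precisely what generates the constant $C_f$, the first-moment term $\frac{N\epsilon}{L}=\cM_1(c)$, and the $O(\epsilon)$ remainder in the statement. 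Your Tonelli/layer-cake computation instead yields the exact identity $\int_0^\infty f'(x)T_c(x)\,\d x=\cM_f(c)-f(0)$, valid for any probability measure and any nondecreasing $C^1$ function $f\ge 0$, so the stated inequality holds trivially with $C_f=1$ and no error term, and none of the $\cA_0$ growth conditions are actually needed for this lemma (they are, of course, still needed elsewhere, e.g.\ in Lemma~\ref{lem:control of ex-grad}). Your treatment of the compactness consequence matches the paper's: $\phi=f'$ is nondecreasing by $f''\ge0$ and diverges by the defining property of $\cA$, which is exactly what Lemma~\ref{lem: compactness in dexg} requires (its proof only uses monotonicity in the weak sense and $\phi(k)\to\infty$, so nondecreasing suffices).
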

\begin{proof}
Since $c\in V^{N,L,\epsilon}$, we can write its tail distribution as
\[\begin{split}\int_0^\infty f'(x) T_c(x) \d x &= \sum_{i=0}^{N}\int_{i \epsilon}^{(i+1)\epsilon} f'(x)  T_{c}(i \epsilon) \d x
= \sum_{i=0}^{N} T_{c}(i\epsilon)  \bigl(f((i+1) \epsilon) -f(i\epsilon)\bigr) \\
&=  \sum_{j=0}^N  c(j\epsilon)   (f((j+1)\epsilon) - f(0))  \\
&\le \sum_{j=0}^N  c(j\epsilon)   f'((j+1)\epsilon)  (j+1)\epsilon \\
& \le C_1 \sum_{j=0}^N  c(j\epsilon)  (f'(j\epsilon) (j+1)\epsilon + f'(\epsilon) (j+1)\epsilon) \\
& \le   C_1 \sum_{j=0}^N  c(j\epsilon) \bigg(C_2 (1+\frac{f(j\epsilon)}{1+j\epsilon}) (j+1)\epsilon + f'(\epsilon) (j+1)\epsilon)\bigg)\\
&\le C_1 \sum_{j=0}^N c(j\epsilon) \bigg(\big(C_2 + f'(\epsilon)\big)(j+1)\epsilon  + C_2 f(j\epsilon)\bigg) \\
&\le C_f \sum_{i = 0}^N \bigl(i\epsilon + f(i\epsilon)\bigr) c(i \epsilon) + O(\epsilon)\\
&=   C_f\bra*{\frac{N\epsilon}{L} +\cM_f(c)} + O(\epsilon)
\end{split}\]
where we  choose $C_{f}=C_1(C_2 + 	f'(1))$.

Finally, with  Lemma \ref{lem: compactness in dexg}, we conclude the claim for relative compactness of the set $\mathcal{S} \subset \cP^1$.
\end{proof}
The next lemma gives an approximation result about the precise rate for $\epsilon \to 0$ to ensure the convergence of discrete sums to integrals in terms of the uniform modulus of continuity of the integrand. We apply the lemma to $\varphi$, which is the reason to assume its uniform continuity.
\begin{lemma}\label{lem:uniform-bdd-phi} 
	Given a uniform continuous function $h:\bbR_+\to \bbR$ with $\omega_h:\bbR_+ \to \bbR_+$ the modulus of uniform continuity.
	Fix $\rho>0$ and suppose the sequence $(N,L,\epsilon)$ satisfies $\frac{N\epsilon}{L}\to \rho$ and $L \omega_h(\epsilon) \to 0$ as $N,L\to \infty$ and $\eps\to 0$, then 
	\[
	\lim_{N\epsilon/L \to \rho} \epsilon \sum_{d=1}^N h(d \epsilon)  = \int_0^\infty h(x) \d x \,.
	\]
\end{lemma}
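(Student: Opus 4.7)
The plan is to compare the Riemann sum $\epsilon \sum_{d=1}^N h(d\eps)$ with the integral $\int_0^{N\eps} h(x) \d x$ using the uniform modulus of continuity, and then pass $\int_0^{N\eps} h(x) \d x \to \int_0^\infty h(x) \d x$ using integrability of $h$ (which is implicit, since the right-hand side of the claim requires it and the lemma will be applied to $\varphi$ from Assumption~\ref{ass:exist}).

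First, I would split the error as
\[
\biggl| \eps \sum_{d=1}^N h(d\eps) - \int_0^\infty h(x) \d x \biggr|
\le \biggl| \eps \sum_{d=1}^N h(d\eps) - \int_0^{N\eps} h(x) \d x\biggr|
+ \biggl| \int_{N\eps}^\infty h(x) \d x\biggr|\,.
\]
For the first term, I write the integral as a sum over intervals $[(d-1)\eps, d\eps]$ to obtain
\[
\eps\sum_{d=1}^N h(d\eps) - \int_0^{N\eps} h(x) \d x = \sum_{d=1}^N \int_{(d-1)\eps}^{d\eps} \bigl(h(d\eps) - h(x)\bigr) \d x \,,
\]
and since $|d\eps - x| \le \eps$ for $x\in[(d-1)\eps, d\eps]$, the uniform modulus of continuity yields the bound $N\eps\, \omega_h(\eps)$. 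Using $N\eps = L \cdot (N\eps/L)$ and that $N\eps/L \to \rho$ is eventually bounded by, say, $\rho+1$, this gives
\[
\biggl| \eps \sum_{d=1}^N h(d\eps) - \int_0^{N\eps} h(x) \d x\biggr|
\le (\rho + 1)\, L\, \omega_h(\eps) \longrightarrow 0
\]
by the standing assumption $L\omega_h(\eps)\to 0$.

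For the second term, since $N\eps/L \to \rho$ and $L \to \infty$ imply $N\eps \to \infty$, the integrability of $h$ gives $\int_{N\eps}^\infty h(x)\d x \to 0$. Combining both estimates, the claim follows. The step is essentially a quantified Riemann-sum approximation; there is no real obstacle, just the technical point that the error constant $N\eps$ must be absorbed by the smallness of $\omega_h(\eps)$, which is precisely what the hypothesis $L\omega_h(\eps) \to 0$ is calibrated for.
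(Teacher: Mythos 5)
Your proof is correct and follows essentially the same route as the paper's: decompose into the Riemann-sum error on $[0,N\eps]$, bound it by $\tfrac{N\eps}{L}\,L\,\omega_h(\eps)\to 0$, and treat the tail $\int_{N\eps}^\infty h$ by integrability. You additionally make explicit the implicit integrability hypothesis on $h$, which the paper leaves unstated, but the argument is the same.
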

\begin{proof}
We consider the difference
\[\abs*{\epsilon \sum_{d=1}^N h(d \epsilon) - \int^{N\epsilon}_0 h(x) \d x }
= \abs*{ \sum_{d=1}^N \int_{(d-1)\epsilon}^{ d\epsilon} ( h(d \epsilon) - h(x) )\d x  }
\le  \frac{N \epsilon}{L}  L\omega_h (\epsilon)   \to 0 \text{ as }  N\epsilon/L \to \rho. \]
Hence, we get the conclusion
\[ \lim_{N\epsilon/L \to \rho} \epsilon \sum_{d=1}^N h(d \epsilon) = \lim_{N\epsilon/L \to \rho} \int_0^{N\epsilon} h(x) \d x = \int_0^\infty h(x) \d x. \qedhere
\]
\end{proof}
It is due to the following lemma that we consider the class of functions $\cA_0$.
\begin{lemma}\label{lem:control of ex-grad} Let $f:\bR_+ \to \bR_+$ be increasing such that there exists $g:\bR_+ \to \bR_+$ with 
\begin{equation} \label{cond:wk-ex-grad} 
	\bigl\lvert -f(x)-f(y)+f(x-z) + f(y+z) \bigr\rvert 
	\le g(z)\bigg( \Big(1 + \frac{f(y)}{1+y}\Big)+   \Big(1 + \frac{f(x)}{1+x}\Big)\bigg)
\end{equation} 
for $x,y,z \ge 0$, such that $x\mapsto g(x) \varphi(x)$ is uniformly continuous and integrable on $\bR_+$. 
Then the following statements hold.
\begin{enumerate}[(i)] 
\item There exists $C_{g\varphi}$ such that for any $c\in\hat{V}^{N,L,\epsilon}$ it holds
\[\bigg|\mathcal{Q}^{N,L,\epsilon}\int f(x) \d c(x)\bigg| \le C_{g\varphi}\Bigl(1+\frac{N}{L}\Bigr)\Bigl(1+\frac{N}{L}+ \int f \d c\Bigr).\]

\item The function $f(x)=x^2$ satisfies  the  condition \eqref{cond:wk-ex-grad}  with $g(z) = z$.

\item 
Any function $f\in \cA_0$ satisfies the sufficient condition $f(x+z)-f(x) \le g(z) \big(1 + \frac{f(x)}{1+x}\big) $ and $ f(x)-f(x-z)\le g(z)  \big(1 + \frac{f(x)}{1+x}\big)$ with the choice $g(z) \propto 1+z + f(z)$.\\
In particular, the function $f(x)=x^{1+\alpha}$ for $\alpha \in (0,1)$ satisfies~\eqref{cond:wk-ex-grad} with $C_1 =1$, $C_2=1+\alpha$, and the function $f(x)=(1+x)\log(1+x)$ with $C_1=C_2=1$.
\end{enumerate}
 \end{lemma}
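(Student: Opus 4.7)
The three items are essentially independent, but since parts (ii) and (iii) supply concrete instances of $g$ to which the abstract bound in (i) is applied, I would proceed in the order (ii), (iii), (i). For \textbf{(ii)}, direct expansion gives $(x-z)^2 + (y+z)^2 - x^2 - y^2 = 2z(z+y-x)$. Since any admissible exchange satisfies $0 \le z \le x$, one has $|z+y-x| \le x+y$, and the elementary bound $u \le 1 + u^2/(1+u)$ for $u \ge 0$ yields $x+y \le (1+x^2/(1+x)) + (1+y^2/(1+y))$, establishing \eqref{cond:wk-ex-grad} with $g(z) = 2z$.

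For \textbf{(iii)}, the mean value theorem combined with monotonicity of $f'$ (from $f''\ge 0$) gives $f(x+z)-f(x) = zf'(\xi) \le zf'(x+z)$ with $\xi \in (x,x+z)$. The first $\cA_0$-property yields $f'(x+z) \le C_1(f'(x) + f'(z))$, and applying the second property to both $f'(x)$ and $zf'(z)$ (using $z/(1+z) \le 1$ to bound $zf'(z) \le C_2(z+f(z))$), one obtains $f(x+z)-f(x) \le C_1C_2\, z\, (1+f(x)/(1+x)) + C_1 C_2 (z+f(z))$, which has the required form with $g(z) \propto z+f(z)$. The backward bound $f(x)-f(x-z) \le C_2 z(1+f(x)/(1+x))$ is immediate since $f'\le f'(x)$ on $(x-z,x)$. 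A triangle inequality then yields \eqref{cond:wk-ex-grad}. For the explicit examples: for $f(x)= x^{1+\alpha}$ with $\alpha \in (0,1]$, concavity gives $(x+z)^\alpha \le x^\alpha + z^\alpha$ (hence $C_1 = 1$) and the second property reduces to $x^\alpha \le 1+x$ (hence $C_2 = 1+\alpha$); for $f(x) = (1+x)\log(1+x)$ with $f'(x) = 1+\log(1+x)$, the subadditivity $\log(1+x+z)\le \log(1+x)+\log(1+z)$ and the identity $f(x)/(1+x)=\log(1+x)=f'(x)-1$ yield $C_1 = C_2 = 1$.

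For \textbf{(i)}, plugging $G(c) = \int f \d c$ into the definition of $\mathcal{Q}^{N,L,\epsilon}$ and using the linearity of $G$ (so the increment collapses to $L^{-1}\gamma^{k\epsilon,l\epsilon,d\epsilon}\cdot f$) gives
\[
\mathcal{Q}^{N,L,\epsilon} G(c) = \sum_{k,l,d}\kappa^L_\epsilon[c](k\epsilon, l\epsilon, d\epsilon)\, \gamma^{k\epsilon,l\epsilon,d\epsilon}\cdot f.
\]
Inserting the pointwise bound \eqref{z-bdd} on $K$ and the bound \eqref{cond:wk-ex-grad} on $\gamma\cdot f$, and using the $x\leftrightarrow y$ symmetry between the two summands in \eqref{cond:wk-ex-grad} together with the identity $(1+y)(1 + f(y)/(1+y)) = (1+y)+f(y)$, the triple sum factorizes (the other symmetric term produces the identical bound with $k,l$ swapped) into
\[
2\tfrac{L}{L-1} C_K \Bigl(\epsilon\sum_d \varphi(d\epsilon)g(d\epsilon)\Bigr)\Bigl(\sum_k (1+k\epsilon)c(k\epsilon)\Bigr)\Bigl(\sum_l \bigl((1+l\epsilon)+f(l\epsilon)\bigr)c(l\epsilon)\Bigr).
\]
The first factor is uniformly bounded by $\int \varphi g + o(1) < \infty$ via Lemma~\ref{lem:uniform-bdd-phi} (using the uniform continuity and integrability of $\varphi g$); the second equals $1 + N\epsilon/L$; the third equals $1 + N\epsilon/L + \int f\d c$. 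The prefactor $L/(L-1)$ is absorbed into $C_{g\varphi}$, as is the non-positive diagonal correction $L^{-1}\delta_{x,y}$ in $\kappa^L_\epsilon$ (which only reduces the magnitude). Since $N\epsilon/L \le N/L$ for $\epsilon \le 1$, this yields the claim.

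The \textbf{main obstacle} is the bookkeeping in (i): obtaining the clean factorization of the triple sum relies on exploiting the symmetric structure of \eqref{cond:wk-ex-grad} and on Lemma~\ref{lem:uniform-bdd-phi} for the uniform control of the $d$-sum (this is exactly where the hypothesis that $z\mapsto \varphi(z)g(z)$ is uniformly continuous and integrable enters). The $\cA_0$-checks for the concrete examples in (iii) are routine calculus but require some elementary algebra to identify the constants $C_1, C_2$.
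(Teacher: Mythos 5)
Your proof is correct and follows essentially the same route as the paper's: the same factorization of the triple sum via $(1+y)\bigl(1+\tfrac{f(y)}{1+y}\bigr)=1+y+f(y)$ and Lemma~\ref{lem:uniform-bdd-phi} for (i), the same explicit expansion for (ii), and the same convexity/sublinearity argument for (iii). Your treatment of (iii) is in fact slightly cleaner than the paper's, which divides by $f'(x)$ and introduces a case distinction around a point $x_0$ with $f'(x_0)>0$, whereas you bound $zf'(z)\le C_2(z+f(z))$ directly and absorb the additive remainder using $1+\tfrac{f(x)}{1+x}\ge 1$.
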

\begin{proof}
\begin{enumerate}[(i)] 
	\item 
Define $h(x)=1+\frac{f(x)}{1+x}$, the assumption implies the estimate
\[
\begin{split}\MoveEqLeft
	\abs*{ \mathcal{Q}^{N,L,\epsilon}\int f(x) \d c(x)} = \abs*{ \sum_{k=1}^N\sum_{l=0}^{N-k}\sum_{d=1}^k \kappa_\epsilon^L[c](k\epsilon, l\epsilon,d\epsilon) \sum_{i=0}^N \gamma^{k\epsilon,l\epsilon,d\epsilon}_{i \epsilon} f(i\epsilon)} \\
&=\abs*{ \sum_{k=1}^N\sum_{l=0}^{N-k}\sum_{d=1}^k \kappa_\epsilon^L[c](k\epsilon, l\epsilon,d\epsilon)  \bigl(-f(k\epsilon)-f(l\epsilon)+f((k-d)\epsilon) + f((l+d)\epsilon)\bigr) } \\
&\le 2 \epsilon \sum_{k=1}^N (1+k\epsilon)c(k\epsilon) \sum_{l=0}^{N-k} c(l\epsilon) (1+l\epsilon)   \sum_{d=1}^k  \varphi( d \epsilon)  g(d \epsilon) (h(k\epsilon)+h(l\epsilon)).
\end{split}\]
Using $(1+x)h(x) = 1+x+f(x)$ and the uniform continuity of  $x\mapsto g(x) \varphi(x)$, we have 
\[\begin{split} 
	\abs*{ \mathcal{Q}^{N,L,\epsilon}\int f(x) \d c(x)} & \le  4    \bra*{ 1+\frac{N}{L}} \sum_{k=1}^N (1+k\epsilon+ f(k\epsilon)) c(k\epsilon)\, \epsilon \sum_{d=1}^k \varphi(d \epsilon)  g(d \epsilon)\\
& \le 4  C_{\varphi g}  \bra*{1+\frac{N}{L}} \bra*{ 1+ \frac{N}{L}+ \int f \d c } .
\end{split} \] This is due to $\lim_{N\epsilon/L \to \rho} \epsilon \sum_{d=1}^k \varphi(d \epsilon)  g(d \epsilon) = \int \varphi(x) g(x) \d x$ by Lemma \ref{lem:uniform-bdd-phi} so it is bounded.
\item
For $f(x)=x^2$, we have by explicit calculation 
$- 2z x \le -x^2-y^2+(x-z)^2+(y+z)^2 = 2z(-x+y+z) \le 2 z y$ so the condition  \eqref{cond:wk-ex-grad} is satisfied.

%
\item 
More generally, let $f$ be such that $f'' \ge 0$, $f' \ge 0$, $f'(x+z)\le C_1 (f'(x) + f'(z))$ and $f'(x) \le C_2 \bigl(1+ \frac{f(x)}{1+x}\bigr)$.
The convexity of $f$ implies
\[f(x)-f(x-z)\le f'(x) z  \le \bra*{1+ \frac{f(x)}{1+x}}  C_2 z. \]
Then by convexity and sublinearity of $f'$,  we get
\[f(x+z)-f(x) \le f'(x+z) z \le C_1 (f'(x) +f'(z)) z.\]
By monotonicity, there exists $x_0$ such that $f'(x_0) > 0$ and for all $x \le x_0:$ $f'(x) \le f'(x_0)$. Hence, we get for $x \ge x_0$ the estimate
\[ \begin{split}\bra*{f'(x) +f'(z)} z &= f'(x) \bra*{1 + \frac{f'(z)}{f'(x)}} z \le   f'(x) \bra*{1 + \frac{f'(z)}{f'(x_0)}} z\\
 &\le C_2 \bra*{1+\frac{f(x)}{1+x}} \bra*{1 +\frac{C_2}{f'(x_0)} \bra*{1+\frac{f(z)}{1+z}}} z\\
 &\le  C_2 \bra*{1+\frac{f(x)}{1+x}} \bra*{1 +\frac{C_2}{f'(x_0)} (z+f(z)}.
\end{split}\]
For $x \leq x_0$, 
\[\bra*{f'(x)+f'(z)}z \le \bra*{f'(x_0)+f'(z)} z \le \bra*{f'(x_0) +C_2 (1+\frac{f(z)}{1+z}) } z \le \bra*{f'(x_0) + C_2} \bra*{z + f(z)}. \]
Combining the   estimates above, we can choose for $f'(x_0)\ge C_2$, $\tilde C_1\ge \max(1/2, C_1)$
\[\max\bra*{f(x) - f(x-z) ,  f(x+z) - f(x) }\le \bra*{1+\frac{f(x)}{1+x}}   2 \tilde{C_1} f'(x_0) \bra*{1+ z +f(z)} .\]
It is a simple calculation to verify  the functions $x\mapsto x^{1+\alpha}$ for $\alpha \in (0,1)$ and  $x\mapsto (1+x)\log(1+x)$ are in $\cA_0$. \qedhere
\end{enumerate}
\end{proof}
With the characterization of relative compactness in $\cP^1$ and the technical estimates for functions $f\in \calA$, we next show the propagation of the moment bounds assumed in Theorem~\ref{thm:wllnex}.
\begin{proposition}[Propagation of bounded moments]\label{prop: propa of 2. moment}
	Under Assumption \ref{ass:exist}, along a sequence $\frac{N\epsilon}{L} \to \rho$ with $\epsilon \to 0$ sufficiently fast with initial datum satisfying either~\eqref{eq:ass-2-moment} or~\eqref{eq:ass-1+a-moment}, then the process satisfies for some $C>0$ the moment bound
	\[
	\bE^{N,L,\epsilon}[\cM_2(c_t)] \le e^{C t} (\bE^{N,L,\epsilon}[\cM_2(c_0)]+Ct) \,.
	\]
	or
	\[
	\bE^{N,L,\epsilon}[\cM_{f}(c(t))] \le e^{C t} \bigl(\bE^{N,L,\epsilon}[\cM_{f}(c_0)]+Ct \bigr) \,,
	\]
	respectively.
\end{proposition}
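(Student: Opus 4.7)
The plan is to apply Dynkin's formula to the test functional $G(c) = \cM_f(c) = \int f \,\d c$, with $f(x)=x^2$ in case (i) or the given $f\in\cA$ in case (ii), then control the generator action via Lemma~\ref{lem:control of ex-grad}, and close the argument with a Grönwall estimate. Since the microscopic configuration space $V^{N,L,\eps}$ is finite for every fixed $N,L,\eps$, the values $\cM_f(c_t)$ are deterministically bounded, so Dynkin's formula applies with no localization and yields
\begin{equation*}
\bE^{N,L,\eps}[\cM_f(c_t)] = \bE^{N,L,\eps}[\cM_f(c_0)] + \int_0^t \bE^{N,L,\eps}\bigl[\mathcal{Q}^{N,L,\eps} \cM_f(c_s)\bigr] \d s.
\end{equation*}

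The next step is to verify the hypothesis of Lemma~\ref{lem:control of ex-grad}(i) with an appropriate auxiliary function $g$ in each case. For case (i), Lemma~\ref{lem:control of ex-grad}(ii) provides $g(z)=z$, and by Assumption~\ref{ass:exist} the map $z\mapsto z\varphi(z)$ is uniformly continuous and integrable, so the lemma yields
\begin{equation*}
\bigl|\mathcal{Q}^{N,L,\eps}\cM_2(c)\bigr| \le C\bigl(1+\tfrac{N\eps}{L}\bigr)\bigl(1+\tfrac{N\eps}{L}+\cM_2(c)\bigr).
\end{equation*}
For case (ii), Lemma~\ref{lem:control of ex-grad}(iii) furnishes $g(z) \propto 1+z+f(z)$, and the integrability and uniform continuity of $g\varphi$ follow by combining Assumption~\ref{ass:exist} (which supplies the corresponding properties of $\varphi$ and $z\varphi$) with the extra hypothesis that $x\mapsto f(x)\varphi(x)$ be uniformly continuous and integrable. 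The analogous bound therefore holds with $\cM_f$ in place of $\cM_2$.

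Using that $N\eps/L$ is uniformly bounded by $\overline{\rho}$ along the chosen sequence, the generator bound collapses to
\begin{equation*}
\bigl|\mathcal{Q}^{N,L,\eps}\cM_f(c)\bigr| \le C'\bigl(1+\cM_f(c)\bigr),
\end{equation*}
for a constant $C'$ depending only on $\overline{\rho}$ and $\int g\varphi$. Substituting into the Dynkin identity and invoking Fubini--Tonelli produces the integral inequality
\begin{equation*}
\bE^{N,L,\eps}[\cM_f(c_t)] \le \bE^{N,L,\eps}[\cM_f(c_0)] + C'\int_0^t\bigl(1+\bE^{N,L,\eps}[\cM_f(c_s)]\bigr) \d s,
\end{equation*}
from which a standard Grönwall argument delivers the exponential bound stated in the proposition.

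The most delicate point I anticipate is, in case (ii), ensuring that $z\mapsto g(z)\varphi(z)$ is uniformly continuous with a modulus compatible with the ``$\eps\to 0$ sufficiently fast'' regime required by Lemma~\ref{lem:uniform-bdd-phi}, so that the discrete Riemann sum $\eps\sum_d \varphi(d\eps)g(d\eps)$ converges to $\int \varphi g$ uniformly in $N,L,\eps$. This is precisely why the proposition's hypothesis explicitly requires $f\varphi$ to be uniformly continuous and integrable: together with the analogous properties of $\varphi$ and $z\varphi(z)$ built into Assumption~\ref{ass:exist}, one obtains the needed property for $g\varphi \propto (1+z+f(z))\varphi(z)$ without further work, and the ``sufficiently fast'' rate of $\eps$ can be chosen in terms of the resulting modulus of continuity.
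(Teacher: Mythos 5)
Your proposal is correct and follows essentially the same route as the paper: both apply the generator identity (Dynkin's formula) to $\cM_f$, invoke Lemma~\ref{lem:control of ex-grad} with $g(z)=z$ for $f(x)=x^2$ or $g(z)\propto 1+z+f(z)$ for $f\in\cA$, and close with Grönwall. Your additional care in checking the uniform continuity and integrability of $g\varphi$ in each case is exactly what the paper compresses into ``under the appropriate assumptions on $\varphi$.''
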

\begin{proof}
For the proof, we consider an arbitrary $f\in \{x\mapsto x^2 \}\cup \cA_0$.
By the property of the infinitesimal generator
\[\frac{\d}{\d t} \bE^{N,L,\epsilon}[\cM_{f}(c_t)] =\bE^{N,L,\epsilon}[\mathcal{Q}^{N,L,\epsilon} \cM_{f}(c_t)].\]
Under the appropriate assumptions on $\varphi$ in the case if $f(x)=x^2$ or $f\in\cA_0$, we can apply Lemma \ref{lem:control of ex-grad} to obtain the bound
\[ \mathcal{Q}^{N,L,\epsilon} \cM_{f}(c) \le C_{\varphi} \bra*{1+\frac{N}{L}}\bra*{1+\frac{N}{L}+\cM_{f}(c)} \,.
\] 
Hence, we can close the differential inequality for some $C>0$ to
\[\frac{\d}{\d t} \bE^{N,L,\epsilon}[\cM_{f}(c_t)]\le C \bE^{N,L,\epsilon}\pra*{(1+\cM_{f}(c_t))} \]
and conclude by Gronwall's lemma
\[
	\bE^{N,L,\epsilon}[\cM_{f}(c_t)] \le e^{C t} \bigl(\bE^{N,L,\epsilon}[\cM_{f}(c_0)]+Ct \bigr) \,. \qedhere
\]
\end{proof}

\begin{proof}[Proof of Proposition \ref{prop:tightness in dexg}]
On a Skorokhod space $D([0,\infty), V)$ over a complete metric space~$(V,d)$, a family of probability measures $(P_n)_n$ on $D([0,\infty), V)$  is tight \cite[Chapter 3, Theorem 7.2, Corollary 7.4]{ethier2009markov}, \cite[Theorem 16.8]{billingsley2013convergence} if and only if 
\begin{enumerate}[i)]
\item\label{enum:tight1}  for each $t\in [0,\infty)$ and for each $\eta >0$, there exists a compact set $K_{\eta,t} \subset V$, $n_0$ such that  
\[
	P_n(c: c(t)\not \in K_{\eta,t} ) \le \eta \qquad \forall n \ge n_0 \,,
\]
\item 
for each $m,\epsilon>0$ 
\[
	\lim_{\delta \to 0}\limsup_{n\to \infty} P_n(c: w_m'(c,\delta)\ge \epsilon) = 0 \,,
\] 
where $w'_m(c,\delta)= \inf_{\{t_i\}}\max_{i}\sup_{s,t \in [t_{i-1},t_i]} d(c(s),c(t))$ where the infimum ranges over all partitions of $\{0=t_0<t_1<...<t_{n-1}<T\le t_n\}$ with $\min_{1\le i\le n} (t_i -t_{i-1})>\delta$ and $n \ge 1$, $w_m(c,\delta)=\sup_{t\in [0,m]}\sup_{|t-s|<\delta} d(c(t),c(s)) $.
\end{enumerate} 
Now for the space $(V,d)= (\cP^1,W_1)$ and for  the  sequence  $(\bP^{N,L,\epsilon})_{N\epsilon/L\to \rho}$, if we for some $\phi:\bR_+\to \bR$ with $\phi \nearrow \infty$ and each $t\geq 0$ the limit
\[\lim_{G\to \infty} \limsup_{N\epsilon/L \to \rho} \bP^{N,L,\epsilon} \bigg(c:  \int_0^\infty \phi(x) T_{c_t}(x) \d x \ge G \bigg) = 0 \,, \]
then $(\bP^{N,L,\epsilon})$ satisfies \ref{enum:tight1} because by Lemma \ref{lem: compactness in dexg},  for each $G>0$, the set $\bigl\{c(t) :\int_0^\infty \phi(x) T_{c_t}(x) \d x < G\bigr\}$ is relatively compact in $(\cP^1,W_1)$ for any given $t\geq 0$. By Lemma \ref{lem:compactness via second moment}, it suffices to show for each $t\in [0,\infty)$ the limit
\begin{equation}\label{eq:pointwise-in-time compact}
	\lim_{G\to \infty} \limsup_{N\epsilon/L\to \rho} \bP^{N,L,\epsilon} \bigg(c:  \cM_f(c_t) \ge G \bigg) = 0 \,,
\end{equation} 
where $f \in \{x\mapsto x^2\} \cup \cA$.
To show \eqref{eq:pointwise-in-time compact},  for $t\in [0,\infty)$, we use Markov's inequality to get
\[ \bP^{N,L,\epsilon} \Bigl(c:    \cM_f(c_t)
 \ge G \Bigr) \le \frac{1}{G} \bE^{N,L,\epsilon}\Bigl[ \cM_f(c_t) \Bigr]=\frac1G \bE^{N,L,\epsilon}\bigl[\cM_f(c_t)\bigr].\] 
 Under the assumption of uniform $f$ moment bound for the initial data, we get
\[
\sup_{N\epsilon/L \to \rho}\bE^{N,L,\epsilon}\bigl[\cM_f(c_0)\bigr] <+\infty . 
\] 
So, we conclude with Proposition~\ref{prop: propa of 2. moment} and the assumptions~\eqref{eq:ass-2-moment} or \eqref{eq:ass-1+a-moment} for each $t\in [0,\infty)$ for some $C_t$ independent of $N,L,\epsilon$ and any $G>0$ the bound
\[ 
	\bP^{N,L,\epsilon}\Bigl(c:    \cM_f(c_t)
	\ge G \Bigr)  \le \frac{1}{G}C_t \,.
\]
This implies \eqref{eq:pointwise-in-time compact} and thus \ref{enum:tight1}.

To verify ii), we observe by \cite[Theorem 16.9]{billingsley2013convergence}, \cite{aldous1978stopping}, \cite[Theorem 2.2.2]{joffe1986weak}, that it is left to verify Aldous' compactness criterion criterion:
For all $\mu, \eta, m>0$, there exists $\delta_0$ and $N_0$ such that if $\delta\le \delta_0$ and $N\ge N_0$ in the sequence $(\frac{N\epsilon}{L})_{N \in \bN}$ and for any stopping times $\tau$ adapted to the random element  $c$  with $\tau \le m$ it holds
\begin{equation}\label{eq:AldousCriterion}
	\bP^{N,L,\epsilon}(c:W_1(c_{\tau+\delta}, c_{\tau})\ge \mu) \le \eta. 
\end{equation}
We will obtain the bound~\eqref{eq:AldousCriterion} by Markov's inequality and therefore consider 
\begin{equation}\label{eq:expect_d_ex(c,c+)}
	\bE^{N,L,\epsilon}\bigl[W_1(c_{\tau+\delta}, c_{\tau})\bigr] = \bE^{N,L,\epsilon}\bigl[\| T_{c_{\tau+\delta}}- T_{c_{\tau}}\|_{L^1(\bR_+)}\bigr]= \bE^{N,L,\epsilon}\biggl[\sum_{i=0}^N \epsilon \biggl|\sum_{j= i}^N (c_{\tau+\delta}(j \epsilon) - c_{\tau}(j \epsilon))\biggr|\biggr].
\end{equation}
By the martingale problem formulation of Markov processes \cite[Chapter 4, Proposition 1.7, Chapter 1 Proposition 1.5]{ethier2009markov}, \cite[Section 6.3]{eberle2023markov}, for any given $N,L$ and $\epsilon$, the generator is bounded.
In addition, we have a vector-valued equation on $(i\epsilon)_{i=0}^N$ given by
\begin{equation}\label{eq:martingale problem finite}
	c_{t+\delta}- c_{t} = \int^{t+\delta}_{t} \mathcal{Q}^{N,L,\epsilon}c_r \dx r+ (M_{t+\delta}^L- M_t^L),\qquad \forall t \ge 0 \,,
\end{equation}
where $(M^L_t)_{t\ge0}$ is a martingale under $\bP^{N,L,\epsilon}$. We omitted the superscript $L$ of $c$ in the equation above but the law of $c$ is given by $\bP^{N,L,\epsilon}$. More precisely, the above equation should be understood as the expression for $f(c_{t+\delta})- f(c_t)$ with $f(c) = \operatorname{Proj}_{i\epsilon}(c) = c_{i\epsilon} $ for each $i \in \{0,..,N\}$ and $\operatorname{Proj}_{x}: \bR_{+}^{\bR_{+}} \to \bR$ defined by $c \mapsto c(x)$. Recalling the definition~\eqref{eq:def:gamma} of $\gamma$, we deduce from the bound
\begin{equation}\label{eq:gamma:bound}
	\sum_{i=0}^{N}\biggl\lvert \sum_{j\ge i} \gamma_{j\epsilon}^{k\epsilon, l\epsilon, d\epsilon}\biggr\rvert= \sum_{i=0}^N \bigl\lvert -\1_{(k-d,k]}(i) + \1_{(l,l+d]}(i)\bigr\rvert \le 2 d,
\end{equation}
an estimate of the integral in \eqref{eq:expect_d_ex(c,c+)} using \eqref{eq:martingale problem finite}. For $\tau$ a bounded stopping time, we have
\[\begin{split}
\MoveEqLeft	\sum_{i=0}^N \epsilon \bigg|\sum_{j= i}^N \int_\tau^{\tau+\delta} \mathcal{Q}^{N,L,\epsilon}c_r(j\epsilon) \d r  \bigg| 
\le \sum_{k=1}^N \sum_{l=0}^{N-k} \sum_{d=1}^k  2 d \epsilon \int_{\tau}^{\tau+\delta} \bigl\lvert \kappa_\epsilon^L[c_r] (k\epsilon, l\epsilon, d\epsilon) \bigr\rvert \d r\\
&\le 4 \int_{\tau}^{\tau+\delta}  \sum_{k=1}^N \sum_{l=0}^{N-k}\sum_{d=1}^k   d\epsilon  (1+k\epsilon) (1+l\epsilon)\varphi(d\epsilon)\epsilon c_{k\epsilon} c_{l\epsilon}  \d r
\le 4  \bra*{1+\frac{N\epsilon}{L}}^2  C_{\varphi} \delta.
\end{split}\] 
We observe that $\delta \mapsto M_{\tau + \delta}^L(i\epsilon)- M_{\tau}^L(i\epsilon)$ is a $(F_{\tau+\delta})_{\delta\ge 0}$-martingale for any stopping time~$\tau$.
By the argument in \cite[Theorem 6.11 (Angle bracket process for solutions of martingale problems)]{eberle2023markov} and was also used in \cite[Proof of Theorem 4.11]{norris1999smoluchowski}, we have the equality
\[(M_{\tau+\delta}^L(i\epsilon) - M_\tau^L(i\epsilon))(M_{\tau+\delta}^L(j\epsilon) - M_\tau^L(j\epsilon))=N_{\delta}^{i,j} + \int_{\tau}^{\tau+ \delta} \Gamma_r^L(\operatorname{Proj}_{i\epsilon},\operatorname{Proj}_{j\epsilon})\d r,\] where $N^{i,j}$ is a martingale, $\operatorname{Proj}_{i\epsilon}(x) = x_{i\epsilon}$ as above and the $\Gamma$-operator is defined for linear functions $f,g: \bR^{\bN\epsilon} \to \bR$ by
\[
	\Gamma_r^L(f,g)=\Gamma^L(c_r) (f,g)=L^{-1}\sum_{k=1}^N \sum_{l=0}^{N-k} \sum_{d=1}^k  \kappa^L_\epsilon[c_r](k\epsilon,l\epsilon,d\epsilon)f(\gamma^{k\epsilon,l\epsilon, d\epsilon}) g(\gamma^{k\epsilon,l\epsilon, d\epsilon}) \,.
\] This operator is the Carré du champ operator of $\mathcal{Q}^{N,L,\epsilon}$. In particular, the quadratic variation  
$\pscal{M_{\tau+\cdot}^L(i\epsilon) - M_\tau^L(i\epsilon),M_{\tau+\cdot}^L(j\epsilon) - M_\tau^L(j\epsilon)}_\delta = \int_0^\delta \Gamma_{\tau+r}^L(\operatorname{Proj}_{i\epsilon},\operatorname{Proj}_{j\epsilon})\d r $.
Then  for each $m \in \{0,...,N\}$, we bound the quadratic term by
\[\begin{split} 
		\MoveEqLeft \biggl(\sum_{j=m}^N (M_{\tau+\delta}^L(j\epsilon) - M_{\tau}^L(j\epsilon))\biggr)^2
		= \sum_{i=m}^N  \sum_{j=m}^N  (M_{\tau+\delta}^L(i \epsilon)- M_\tau^L(i \epsilon))  (M_{\tau + \delta}^L(j \epsilon) - M_{\tau}^L( j \epsilon))\\
&=\sum_{i=m}^N \sum_{j=m}^N N_{\delta}^{i,j} +  L^{-1} \sum_{i=m}^N \sum_{j=m}^N \int_{\tau}^{\tau+ \delta}\sum_{k=1}^N \sum_{l=0}^{N-k} \sum_{d=1}^k  \kappa^L_\epsilon[c_r](k\epsilon,l\epsilon,d\epsilon) \gamma^{k\epsilon,l\epsilon, d\epsilon}_{i\epsilon} \gamma^{k\epsilon,l\epsilon, d\epsilon}_{j\epsilon}\d r.
\end{split}\] 
Above, we use~\eqref{eq:def:gamma} in the slightly abused form $\gamma^{x,y,z}_{l} = - \delta(x-l) - \delta(y-l) + \delta(x-z-l)+\delta(y+z-l)=\gamma^{x,y,z} \cdot \delta_{l} $.
The martingale property implies 
$\bE^{N,L,\epsilon}[ N^{i,j}_\delta ] =\bE^{N,L,\epsilon}[ N^{i,j}_0 ] = 0$. In the remaining part of the proof, we abbreviate the triple sum $\sum_{k=1}^N \sum_{l=0}^{N-k} \sum_{d=1}^k $ by $\sum_{(k,l,d)\in\Delta_N}$.
We obtain the pointwise estimate
\[ \begin{split}
	\MoveEqLeft
	L^{-1} \sum_{i=m}^N \sum_{j=m}^N \int_{\tau}^{\tau+ \delta}\sum_{\mathclap{(k,l,d)\in\Delta_N}}	 \kappa^L_\epsilon[c_r](k\epsilon,l\epsilon,d\epsilon) \gamma^{k\epsilon,l\epsilon, d\epsilon}_{i\epsilon} \gamma^{k\epsilon,l\epsilon, d\epsilon}_{j\epsilon}\d r\\
&\le L^{-1} \mkern-16mu\sum_{{(k,l,d)\in\Delta_N}} \Biggl|\sum_{i=m}^N \gamma^{k\epsilon,l\epsilon, d\epsilon}_{i\epsilon}\Biggr| \Biggl|\sum_{j=m}^N   \gamma^{k\epsilon,l\epsilon, d\epsilon}_{j\epsilon}\Biggr| \int_{\tau}^{\tau+ \delta}  \kappa^L_\epsilon[c_r](k\epsilon,l\epsilon,d\epsilon)  \dx r \\
&= L^{-1} \ \ \sum_{\mathclap{(k,l,d)\in\Delta_N}}\ \  \bigl|-\1_{(k-d,k]}(m) + \1_{(l,l+d]}(m)\bigr|^2 \int_{\tau}^{\tau+ \delta}  \kappa^L_\epsilon[c_r](k\epsilon,l\epsilon,d\epsilon)  \d r\\
& =L^{-1} \ \ \sum_{\mathclap{(k,l,d)\in\Delta_N}} \ \ \bigl\lvert -\1_{(k-d,k]}(m) + \1_{(l,l+d]}(m)\bigr\rvert \int_{\tau}^{\tau+ \delta}  \kappa^L_\epsilon[c_r](k\epsilon,l\epsilon,d\epsilon)  \d r.
\end{split}\]
By doing so, we can estimate the martingale term in \eqref{eq:martingale problem finite} using the above calculation and  Jensen's inequality to get
\[\begin{split}
		\MoveEqLeft \bE^{N,L,\epsilon}\biggl[ \sum_{m=0}^N \epsilon \Bigl|\sum_{j=m}^N (M_{\tau+\delta}^L(j\epsilon) - M_{\tau}^L(j\epsilon))\Bigr|\biggr]
	= \sum_{m=0}^N \epsilon \bE^{N,L,\epsilon}\biggl[  \Bigl(\sum_{j=m}^N (M_{\tau+\delta}^L(j\epsilon) - M_{\tau}^L(j\epsilon))\Bigr)^2 \biggr]^{1/2} \\
&\le  \epsilon \sum_{m=0}^N   \bE^{N,L,\epsilon}\biggl[  L^{-1} \sum_{i=m}^N \sum_{j=m}^N \int_{\tau}^{\tau+ \delta}\ \ \sum_{\mathclap{(k,l,d)\in\Delta_N}}\ \  \kappa^L_\epsilon[c_r](k\epsilon,l\epsilon,d\epsilon) \gamma^{k\epsilon,l\epsilon, d\epsilon}_{i\epsilon} \gamma^{k\epsilon,l\epsilon, d\epsilon}_{j\epsilon}\d r \biggr]^{\frac12} \\
&\le \epsilon \sqrt{N+1} \Biggl( \sum_{m=0}^N  \bE^{N,L,\epsilon}\biggl[  L^{-1} \sum_{\mathclap{(k,l,d)\in\Delta_N}}\ \bigl|-\1_{(k-d,k]}(m) + \1_{(l,l+d]}(m)\bigr| \int_{\tau}^{\tau+ \delta}  \kappa^L_\epsilon[c_r](k\epsilon,l\epsilon,d\epsilon)  \d r\biggr] \Biggr)^{\frac12}\\
&= \epsilon \sqrt{N+1} \bigg( \bE^{N,L,\epsilon}\bigg[  L^{-1} \sum_{\mathclap{(k,l,d)\in\Delta_N}}\ \   2d  \int_{\tau}^{\tau+ \delta}  \kappa^L_\epsilon[c_r](k\epsilon,l\epsilon,d\epsilon)  \d r\bigg] \bigg)^{\frac12}\\
& \le  \epsilon \sqrt{N+1} \bigg( \bE^{N,L,\epsilon}\bigg[  L^{-1} \epsilon^{-1} \int_{\tau}^{\tau+ \delta} \ \ \sum_{\mathclap{(k,l,d)\in\Delta_N}}\ \   2d \epsilon     (1+k\epsilon)(1+l\epsilon) \varphi(d \epsilon)\epsilon c_{r}(k\epsilon) c_r(l\epsilon)  \d r\bigg] \bigg)^{\frac12}\\
& \le  \epsilon \sqrt{N+1} \biggl(   L^{-1}  \epsilon^{-1} 2 \biggl(1+\frac{N\epsilon}{L}\biggr)^2 C_{\varphi} \delta \biggr)^{\frac12} = \sqrt{\frac{ (N+1)\epsilon}{L}} (2C_{\varphi}\delta )^{1/2} \biggl(1+\frac{N\epsilon}{L}\biggr) \,.
 \end{split}\]
where we used the bound~\eqref{eq:gamma:bound}. 
Since $\frac{N\epsilon}{L} \to \rho$ by assumption, we verify Adlous' criterion~\eqref{eq:AldousCriterion} by an application of the Markov inequality 
\[
\bP^{N,L,\epsilon}(c:W_1(c_{\tau+\delta}, c_{\tau})\ge \mu) \le \frac{1}{\mu}  \bE^{N,L,\epsilon}[W_1(c_{\tau+\delta}, c_{\tau}) ]\le \frac{C}{\mu} \delta^{\frac12}. \qedhere
\]
\end{proof} 
\begin{proposition}[Identification of limit]\label{prop:ident-limit}
	Let the assumptions of Proposition~\ref{prop:tightness in dexg} hold. 
	Let $\eps\to 0$ such that $\max\{\omega_{\overline{\varphi}}(\epsilon),\omega_K(\epsilon)\}L \to 0$ as $\frac{N\epsilon}{L}\to \rho$, where $\omega_K$ and $\omega_{\overline{\varphi}}$ are the moduli of continuity of $K$ and $(1+x+f(x))\varphi(x)$ or $(1+x) \varphi(x)$ depending on the cases in  Proposition~\ref{prop:tightness in dexg}.
	Then up to a subsequence $\bP^{N,L,\epsilon} \to \delta_c$ in $D([0,\infty),\mathcal{P}^1)$where $c$ is a weak solution to~\eqref{eq:CGEDG} in the sense of Definition~\ref{def:weak-sol}.
\end{proposition}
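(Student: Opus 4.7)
The plan is to combine the tightness from Proposition~\ref{prop:tightness in dexg} with the martingale problem for $\mathcal{Q}^{N,L,\epsilon}$ in order to identify every subsequential limit as a Dirac mass supported on a weak solution of~\eqref{eq:CGEDG}. By Proposition~\ref{prop:tightness in dexg} and Skorokhod's representation theorem, I may extract a subsequence and pass to a common probability space on which $c^{N,L,\epsilon}\to c$ almost surely in $D([0,\infty),\cP^1)$, with $c_0^{N,L,\epsilon}\to c_0$ deterministically.

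For every Lipschitz test function $f:\bR_+\to\bR$, Dynkin's formula yields
\[
\langle c_t^{N,L,\epsilon},f\rangle = \langle c_0^{N,L,\epsilon},f\rangle + \int_0^t \mathcal{Q}^{N,L,\epsilon}\langle c_s^{N,L,\epsilon},f\rangle\,\mathrm{d}s + M_t^{f,N,L,\epsilon},
\]
where $M^{f,N,L,\epsilon}$ is a martingale. I would control its quadratic variation via the Carré du champ exactly as in the proof of Proposition~\ref{prop:tightness in dexg}, using $|\gamma^{x,y,z}\cdot f|\le 2\,\mathrm{Lip}(f)\,z$, the kernel bound~\eqref{z-bdd}, and the moment propagation from Proposition~\ref{prop: propa of 2. moment}. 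Doob's inequality then gives $\bE[\sup_{t\le T}|M_t^{f,N,L,\epsilon}|^2]\le C_{f,T}/L\to 0$, so the noise disappears in $L^2$ uniformly on compacts.

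The analytic heart of the proof is the convergence
\[
\int_0^t \mathcal{Q}^{N,L,\epsilon}\langle c_s^{N,L,\epsilon},f\rangle\,\mathrm{d}s \ \longrightarrow\ \int_0^t Q(c_s)\cdot f\,\mathrm{d}s
\]
along the subsequence. Writing $\mathcal{Q}^{N,L,\epsilon}\langle c,f\rangle$ as a triple sum over $(k,l,d)$ weighted by $K(k\epsilon,l\epsilon,d\epsilon)\,\epsilon\,c(k\epsilon)c(l\epsilon)(\gamma^{k\epsilon,l\epsilon,d\epsilon}\cdot f)$, I would decompose the error into three pieces: (i) the Riemann-sum-to-integral approximation in the $z$-variable, controlled via Lemma~\ref{lem:uniform-bdd-phi} together with the hypotheses $\omega_K(\epsilon)L\to 0$ and $\omega_{\overline{\varphi}}(\epsilon)L\to 0$; (ii) the passage from empirical measures to $c$ in the $W_1$-topology on the outer $(x,y)$ integration, using the kernel regularity~\eqref{z-bdd}, the continuity of $K$, and the Lipschitz property of $f$; (iii) tail truncation via the moment bound of Proposition~\ref{prop: propa of 2. moment} combined with the integrability of $(1+x+f(x))\varphi(x)$ or $(1+x)\varphi(x)$, depending on the moment case. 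The correction $L^{-1}\delta_{k,l}$ inside $\kappa_\epsilon^L$ produces only an $O(1/L)$ remainder. Combined with Step 2, passing to the limit in Dynkin's formula gives almost surely $\langle c_t,f\rangle=\langle c_0,f\rangle+\int_0^t Q(c_s)\cdot f\,\mathrm{d}s$ for every Lipschitz $f$, which is Definition~\ref{def:weak-sol}.

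To upgrade to $\bP=\delta_c$, the deterministic initial condition together with the $L^2$-vanishing of the martingale yields $\mathrm{Var}(\langle c_t^{N,L,\epsilon},f\rangle)\to 0$: decomposing the variance into a drift and a martingale contribution, the drift part is absorbed by a Gronwall argument exploiting the quadratic dependence of $\mathcal{Q}$ on $c$ and the uniform moment bounds, while the martingale part vanishes directly. Hence each linear functional $\langle c_t,f\rangle$ is $\bP$-almost surely deterministic; running this over a countable dense family of Lipschitz $f$'s and using continuity in $t$ forces the entire path to be deterministic, so $\bP=\delta_c$. The main obstacle is step (iii) of the generator convergence: the three approximations---discretization in $z$, $W_1$-convergence in $(x,y)$, and tail truncation---must be controlled \emph{simultaneously} and uniformly along trajectories in $\cP_{\le\overline{\rho}}$, which is precisely what the uniform-continuity hypotheses on $K$ and $\varphi$ and the sublinear growth bounds were tailored to permit.
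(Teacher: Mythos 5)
Your strategy coincides with the paper's: extract a limit by tightness, write the Dynkin/martingale decomposition, kill the martingale through the Carr\'e du champ and Doob's inequality, and split the generator error into a $z$-discretization part (controlled by $\omega_K(\eps)L\to0$ and Lemma~\ref{lem:uniform-bdd-phi}), a $W_1$-continuity part for the $(x,y)$-integration after truncation to a compact set, and a tail part controlled by the superlinear moment from Proposition~\ref{prop: propa of 2. moment}. The only cosmetic difference is that you invoke Skorokhod's representation theorem to get almost sure convergence, whereas the paper stays with convergence in law and tests the functionals $I_{s,t}^L$, $I_{s,t}$ against bounded Lipschitz functions $h$ together with the continuous mapping theorem; both routes work, and the truncation $\overline I^R_{s,t}$ is needed in either case because $F$ itself is not $W_1$-continuous (the integrand grows like $(1+x)(1+y)$). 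One technical caveat: when bounding the quadratic variation of $M^{f}$ you square $|\gamma^{x,y,z}\cdot f|\le 2\Lip(f)z$, which would require $z\mapsto z^2\varphi(z)$ to be integrable --- an assumption the paper explicitly avoids; the paper circumvents this by working with the coordinate martingales $M^L(i\eps)$, for which $|\gamma_{i\eps}|^2$ collapses to $|\gamma_{i\eps}|$, so you should either restrict first to bounded Lipschitz $f$ or argue coordinatewise.

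The one step that does not close as sketched is your final upgrade to $\bP=\delta_c$ via $\mathrm{Var}(\skp{c_t^{N,L,\eps},f})\to0$ by Gronwall. The drift $\mathcal{Q}^{N,L,\eps}\skp{c,f}$ is quadratic in the \emph{whole measure} $c$ and is not a function of the scalar $\skp{c,f}$, so no closed differential inequality for the variance of a single linear functional is available; making this rigorous requires a stability estimate for the limit dynamics (a coupling of two copies, or essentially the Lipschitz-in-$W_1$ estimate underlying Theorem~\ref{thm:unique}, which needs Assumption~\ref{ass:unique}). The paper's own proof stops at showing that $\bP$ is concentrated on weak solutions with the deterministic initial datum $c_0$, and the degeneracy of $\bP$ to a Dirac mass is obtained only in combination with the uniqueness theorem, as the remark following Theorem~\ref{thm:unique} makes explicit.
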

\begin{proof}Under the assumptions of Proposition \ref{prop:tightness in dexg}, we have the tightness of $(\bP^{N,L,\epsilon})$, measures on the Skorokhod space $D([0,\infty), V)$ with $(V,d)= (\cP^1,W_1)$. By Prokhrov theorem, there exists a stochastic process $c \in D([0,\infty), \cP_{\le \overline{\rho}}(\bR_+) )$ with $\overline{\rho}= \sup \frac{N\epsilon}{L}$ with its distribution being a limit of weak convergence of probability measures denoted $\bP$. 
We want to show that each of the three terms in the equation 
\begin{equation}\label{eq:prelimit evolution}c_{t}^L- c_{s}^L = \int^{t}_{s} \mathcal{Q}^{N,L,\epsilon}c_r^L \dx r+ (M_{t}^L- M_s^L),\quad t \ge s \ge 0,\end{equation}
converge in law if $\bP^{N,L,\epsilon} \to \bP$ where $c^L \sim \bP^{N,L,\epsilon}$ and the limit process $c \sim \bP$ satisfies  the deterministic equation 
\[c_{\tau +\delta } - c_{\tau} = \int_{\tau}^{\tau+\delta} Q(c_r) \d r \] 
for some operator $Q$ in duality with Lipschitz functions that will be identified as the generator from~\eqref{eq:def:generator}.
For $i \in \{0,1,...,N\}$, we compute
\[\begin{split}\bE^{N,L,\epsilon} [(M_{t}^L(i\epsilon) - M_{s}^L(i\epsilon))^2]&=  \bE^{N,L,\epsilon}\bigg[   L^{-1}  \int^{t}_{s}\sum_{(k,l,d)\in\Delta_N} \kappa^L_\epsilon[c_r](k\epsilon,l\epsilon,d\epsilon) (\gamma^{k\epsilon,l\epsilon, d\epsilon}_{i\epsilon})^2 \d r\bigg]\\
 &\le  \bE^{N,L,\epsilon}\bigg[2 (2)^2  L^{-1} \biggl(1+\frac{N\epsilon}{L}\biggr)^2 C_{\varphi} (t-s) \bigg] \le \frac{C(t-s)}{L}. 
\end{split}\]
The estimate above is uniform in initial condition and depends on time only through $(t-s)$. By Doob's martingale inequality, 
\[
	\bE^{N,L,\epsilon}\Bigl[\sup_{0\le s \le T} M_s^{L}(i\epsilon)^2\Bigr]
	\le 4  \bE^{N,L,\epsilon}\bigl[M_T^{L}(i\epsilon)^2\bigr] \qquad\text{for $T>0$.}
\]
This implies $M_{\cdot}^L(i\epsilon)  \to  0$ almost surely as $\bP^{N,L,\epsilon} \to \bP$ uniformly on compact interval of time, for each $i \in \{0,1,...,N\}$. 

Let $T_\bP$ be the set of times $t$ for which time projection $\pi_t(c)=c_t$ is continuous outside on a set of $\bP$-measure $0$. Then for $t,t+\delta \in T_\bP$, we have $\bP^{N,L,\epsilon}\pi^{-1}_{t,t+\delta} \to \bP\pi^{-1}_{t,t+\delta}$ by \cite[Theorem 16.7]{billingsley2013convergence}. 
Since, we have for Lipschitz functions $f$ on $\bR_+$, that 
\[\int f \d (c -c') \le \Lip(f) W_1(c,c') \,, 
\]
we get that map $c\mapsto \int f \d c$ is continuous with respect to $W_1$.
Hence, we can apply the continuous mapping theorem, to get for Lipschitz continuous functions $f$  on $\bR_+$ the convergence of $\int f (c^{L}_{t+\delta} -c^L_{t})$  to $\int f (c_{t+\delta}-c_t)$ in law where $c_t \sim \bP\pi^{-1}_{t}$.

It remains to show the convergence in law of the integral on the right side of \eqref{eq:prelimit evolution} when tested with respect to Lipschitz function $f$. We abbreviate $I^L_{s,t}(c)=\int_{s}^{t} F^L(c_r) \d r$,  for $c \in \hat{V}^{N,L,\epsilon}$,  
where  
\[
\begin{split}F^L(c) =\int_{\bR_+} f(x)  \mathcal{Q}^{N,L,\epsilon} c(\d x) = \sum_{k=1}^N  \sum_{l=0}^{N-k} \sum_{d=1}^k \kappa_\epsilon^L[c] ( k \epsilon, l \epsilon, d \epsilon) \  \gamma^{k\epsilon,l\epsilon, d \epsilon}\cdot f  ,
\end{split} 
\] 
with $\gamma^{x,y,z} \cdot f =(-f(x) - f(y) + f(x-z) + f(y+z)$. 
The tentative limit is $I_{s,t}: c\mapsto \int_s^t F(c_r ) \d r $, where for $c \in \cP^1(\bR_+)$, we define in terms of the generator from~\eqref{eq:def:generator} the function
\[
\begin{split}
	F(c)&=  Q(c) \cdot f = \int_0^\infty \int_0^\infty   \int_0^x  K(x,y,z)  \gamma^{x,y,z}\cdot f \,   \d z\,c(\d y)c(\d x).
\end{split}\]
By using that $f$ is Lipschitz and Assumption~\ref{ass:exist}, we can bound
\[
\begin{split}
F(c) &\le  2 \Lip(f)  \int_0^\infty  \int_0^\infty  K(x,y) \int_0^x\varphi(z)  z  \, \d z\, c(\d y)c(\d x) \\
&\le 2 \Lip(f)  \|\varphi\|_{L^{1,1}} \int_0^\infty  \int _0^\infty K(x,y)  c (\d y) c (\d x)\\
&\le 2 C \Lip(f)  \|\varphi\|_{L^{1,1}} (1+\cM_1(c))^2.
\end{split}\]
The convergence of the random variable $I_{s,t}^L[c^L] \to I_{s,t}[c]$ in law is equivalent to the convergence 
for all Lipschitz function $h$ of 
\[
	\tilde\bE\bigl[ h(I_{s,t}^L[c^L])  - h( I_{s,t}[c])\bigr] \to 0 \quad\text{ as }\quad L \to \infty,
\]
where $\tilde\bE$ is with respect to $\tilde\bP$ such that $\tilde\bP_\#c^{L}= \bP^{N,L,\epsilon}$ and $\tilde\bP_\#c= \bP$.
We consider the decomposition
\begin{equation}\label{eq:deomposition}
	\tilde\bE\bigl[ h(I_{s,t}^L[c^L])  - h( I_{s,t}[c])\bigr] = \tilde\bE\bigl[ h(I_{s,t}^L[c^L])  -  h(I_{s,t}[c^L])\bigr] + \tilde\bE\bigl[ h( I_{s,t}[c^L]) - h(I_{s,t}[c]) \bigr] \,.
\end{equation}
We begin with the first term in the decomposition~\eqref{eq:deomposition}, where by Lipschitz continuity, it suffices to show $\tilde\bE[|I_{s,t}^L[c^L]- I_{s,t}[c^L] |]\to 0$ as $L \to \infty$.
We have for fixed $c^L\in \hat{V}^{N,L,\epsilon}$,  the estimate
\[\begin{split}
	\MoveEqLeft |F(c^L) - F^L(c^L)| = \bigg|\sum_{k=1}^N  c^L(k \epsilon) \sum_{l=k}^N c^L((l-k) \epsilon) \bigg(\int_0^{k \epsilon} K(k\epsilon, (l-k)\epsilon, z)  f\cdot \gamma^{k\epsilon, (l-k)\epsilon,z} \d z \\
&\omit\hfill$\displaystyle -  \epsilon \sum_{d=1}^k K(k \epsilon, (l-k)\epsilon, d\epsilon)  f \cdot \gamma^{k\epsilon, (l-k)\epsilon, d \epsilon}   \bigg) \bigg| + O(1/L)$
\\
&=\Bigg| \sum_{k=1}^N c^L(k \epsilon) \sum_{l=0}^{N-k} c^L(l\epsilon)  \sum_{d=1}^k  \int_{(d-1)\epsilon}^{d \epsilon}  \bigg(  (K(k\epsilon, l\epsilon, z) - K(k\epsilon, l\epsilon, d \epsilon)) f \cdot \gamma^{k\epsilon, l\epsilon, z} \\
&\omit\hfill$\displaystyle +  K(k\epsilon, l\epsilon, d\epsilon) (f \cdot \gamma^{(l+d)\epsilon, (k-d)\epsilon, d\epsilon -z}) \bigg)\d z \Bigg|+ O(1/L)$ \\
& \le  2 \Lip(f) \sum_{k=1}^N c^L(k\epsilon) \sum_{l=0}^{N-k} c^L(l\epsilon) \sum_{d=1}^k \int_{(d-1)\epsilon}^{d \epsilon} \bigg(  \omega_K(\epsilon) z +   K(k\epsilon, l\epsilon, d \epsilon) (d\epsilon -z)    \bigg) \d z+ O(1/L)\\
& \le  \Lip(f) \sum_{k=1}^N c^L(k\epsilon) \sum_{l=0}^{N-k} c^L(l\epsilon)  \bigg( \omega_K (\epsilon) (k\epsilon)^2+ (1+k\epsilon)(1+l\epsilon)\sum_{d=1}^k  \varphi(d \epsilon)\epsilon^2 \bigg) +O(1/L)   \\
& \le \Lip(f) \sum_{k=1}^N c^L(k\epsilon) \sum_{l=0}^{N-k} c^L(l\epsilon)  \bigg( \omega_K (\epsilon)(N\eps) (k\epsilon)+ (1+k\epsilon)(1+l\epsilon)\sum_{d=1}^k  \varphi(d \epsilon)\epsilon^2 \bigg) +O(1/L)\\
 & \le \Lip(f) \bigg( \overline\rho \omega_K(\eps)L \cM_1(c^L)  +\epsilon (1+\cM_1(c^L) )^2  \|\varphi\|_{L^1} \bigg) +O(1/L)
\end{split}\]
where we used $\omega_K(\epsilon) (N\epsilon) \le \overline{\rho} \omega_K(\epsilon)  L  \to 0$ 
and Lemma \ref{lem:compactness via second moment}.

Let $I_{s,t}: c\mapsto \int_s^t F(c_r ) \d r $. The calculation above shows that $I_{s,t}$ is bounded on probability measures with uniformly bounded first moment. We note
\begin{equation*}
 \quad |I_{s,t}(c^L)-I^L_{s,t}(c^L)| \le  |t-s|\sup_{r\in[s,t]}|F(c^L_r) - F^L(c^L_r)|.
\end{equation*}  
 
Since the first moment is uniformly bounded by $\overline\rho$ along the sequence $N\epsilon/L\to \rho$, we conclude that 
\[
	\lim_{N\epsilon/ L \to \rho}   |I_{s,t}(c^L)-I^L_{s,t}(c^L)|  =0 \qquad\text{ uniformly for } c^L \in \hat{V}^{N,L,\epsilon} \,.
\] 
It remains to show the convergence of the second term in the decomposition~\eqref{eq:deomposition}, that is for all $h$ Lipschitz continuous 
\[\tilde\bE\bigl[ h(I_{s,t}[c])  -  h(I_{s,t}[c^L]) \bigr]  \to 0 \qquad \text{ as } L \to \infty.\]
We define for any $R>0$ fixed that truncated limit functional $\overline I_{s,t}^R(c)= \int_s^t F_R(c_r) \d r$ with
 
\[ F_R (c) =\int_{[0,R]^2}  \int_0^x   K(x,y,z) \gamma^{x,y,z}\cdot f \d z\, c( \d y)  c(\d x). \]  With this definition, our strategy is to decompose the difference further and control each of the three terms on the right-hand side in
\[\begin{split}\tilde\bE\bigl[ h(I_{s,t}[c])  -  h(I_{s,t}[c^L]) \bigr]&=\tilde\bE\bigl[ h(I_{s,t}[c])  -  h(\overline I_{s,t}^R[c]) \bigr] \\
&\quad + \tilde\bE\bigl[ h(\overline I_{s,t}^R[c])- h(\overline I_{s,t}^R[c^L] )\bigr] + \tilde\bE\bigl[ h(\overline I_{s,t}^R[c^L])- h(I_{s,t}[c^L] )   \bigr].\end{split}\] 
By continuity of $K$, $K(\cdot,\cdot,z)$ is bounded on $[0,R]^2$ for $z\ge 0$. The boundedness and continuity of $K$ imply that the map 
\[[0,R]^2 \ni(x,y) \mapsto  \int_0^x K(x,y,z) \gamma^{x,y,z}\cdot f \d z \] 
is continuous by dominated convergence.  
If $c^n \to c$ in $W_1$, we get also $c^n \times c^n \to c \times c$ converges weakly and so $F_R(c^n)\to F_R(c)$.
This shows that $F_R$ is continuous with respect to $W_1$ for each $R>0$.
By dominated convergence in time, we have $\overline I^R_{s,t}$ is continuous on $D([0,\infty), (\cP(\bR_+),W_1))$. Therefore, if $c^n \to c $ in law on $D([0,\infty), (\cP(\bR_+),W_1))$, the continuous mapping theorem implies the sequence of random variables $\overline I^R_{s,t}(c^n) \to \overline I^R_{s,t}(c)$ in law.  This implies for all $R>0$, all Lipschitz continuous and bounded functions $f$,  \[\tilde\bE\bigl[ h(\overline I_{s,t}^R[c])- h(\overline I_{s,t}^R[c^L] )\bigr] \to 0 \qquad \text{ as } L \to \infty.\]
For the other two terms, the idea is to use the bounded superlinear moment $\cM_g$ of $c$ and~$c^L$ in expectation with $g\in \cA$ from Proposition \ref{prop: propa of 2. moment}. For $g \in \cA$ and $c \sim \hat{\bP}\in \{ \bP^{N,L,\epsilon},\bP\}$ on $D([0,\infty), (\cP_{\le \overline{\rho}}(\bR_+),W_1))$, we take the expectation and estimate
\[\begin{split}
\frac{1}{\operatorname{Lip}(h)} \hat{\bE}\bigl[ h(I_{s,t}[c])  -  h(\overline I_{s,t}^R[c])\bigr] &\le  \hat{\bE}\Bigl[\bigl|I_{s,t}[c] - \overline I_{s,t}^R[c]\bigr|\Bigr] \\
&\le \hat{\bE}\biggl[\int_s^t |F(c_r)-F_R(c_r)| \d r \biggr] \\
&= \hat{\bE}\biggl[\int_s^t \biggl| \iint_{{[0,R)^2}^c} \int_0^x   K(x,y,z) \gamma^{x,y,z}\cdot f \d z \, c_r (\d y)  c_r (\d x) \biggr| \d r \biggr]\\
&\le 2 \operatorname{Lip}(f)  \overline{\rho} C_\varphi  \hat{\bE}\biggl[\int_s^t   \int_{R}^{\infty}(1+x)   c_r (\d x)  \d r \biggr]\\ 
& \le 2 \operatorname{Lip}(f)  \overline{\rho} C_\varphi \frac{1+R}{g(R)} \hat{\bE}\biggl[\int_s^t   \int_{R}^{\infty}g(x)   c_r (\d x)  \d r \biggr]\\ 
&  \le 2 \operatorname{Lip}(f)  \overline{\rho} C_\varphi \frac{1+R}{g(R)} \hat{\bE}\biggl[\int_s^t  \cM_g(c_r) \d r \biggr]\\ 
&= 2 \operatorname{Lip}(f)  \overline{\rho} C_\varphi \frac{1+R}{g(R)} \int_s^t \hat{\bE}_r [\cM_g(c_r) ] \d r \\ 
&\le 2  \operatorname{Lip}(f)    \overline{\rho} C_\varphi |t-s|  \frac{1+R}{g(R)} \sup_{r\in[s,t]}\hat{\bE}_r [\cM_g(c_r) ] \,,
\end{split}
\] where $\hat{\bE}_r $ is with respect to $\hat{\bP}_{\#}\pi_t$ with $\pi_t$ being the  time projection at time $t$.\\
Together with Proposition \ref{prop: propa of 2. moment}  and superlinearity of  $g\in \cA$, the above estimate implies that along a converging sequence $\bP^{N,L,\epsilon} \to \bP$ on $D([0,\infty), \cP_{\le \overline{\rho}}(\bR_+))$, that 
\[
	\bE^{N,L,\epsilon}\bigl[ |I_{s,t}[c] - \overline I^R_{s,t}[c]|\bigr] =\tilde\bE\bigl[ |I_{s,t}[c^L] - \overline I^R_{s,t}[c^L]|\bigr] \to 0
	\quad 
	\text{ as $R \to \infty$ uniformly in $N,L, \epsilon$.}
\] 
Hence, by lower semicontinuity also
\[\sup_{t\in [0,T] }\bE \bigl[\cM_{g}(c_t)\bigr]\le  \sup_{t\in [0,T] }  \liminf_{N\epsilon/L \to \rho}\bE^{N,L,\epsilon}\bigl[\cM_g(c_t)\bigr] \,,
\] 
and we conclude 
\[
	\tilde\bE\bigl[ |I_{s,t}[c] - \overline I^R_{s,t}[c]|\bigr] \to 0 \qquad\text{ as } R\to \infty \,.
\]
Therefore for all bounded Lipschitz continuous functions $h$, $\delta >0$, we can choose $R$ large enough such that 
$\max\Big(\tilde\bE\bigl[ \big|I_{s,t}[c^L] - \overline I^R_{s,t}[c^L]\bigr|\bigr] ,\tilde\bE\bigl[ \bigl|I_{s,t}[c] - \overline I^R_{s,t}[c]\bigr|\bigr] \Big)< \delta /3 $, and $L(R)$ large enough such that $\bigl| \tilde\bE\bigl[ h(\overline I_{s,t}^R[c])- h(\overline I_{s,t}^R[c^L] )\bigr]\bigr| < \delta /3 $.
With this choice for $L$, we conclude
\[\bigl|\tilde\bE\bigl[ h(I_{s,t}[c])  -  h(I_{s,t}[c^L]) \bigr]\bigr| < \delta .\] Since $\delta>0$ is arbitrary, we showed 
$\lim_{L\to \infty}\tilde\bE\bigl[ h(I_{s,t}[c])  -  h(I_{s,t}[c^L])\bigr]=0 $ for each $h$ that is bounded Lipschitz continuous. This concludes the proof of the convergence in law  $I_{s,t}^L[c^L] \to I_{s,t}[c]$.
\end{proof}

\section{Lebesgue density for the limit}

In this section we show that under suitable assumptions on the initial data, the limit measure is absolutely continuous with respect to the Lebesgue measure.  The argument is similar to \cite[Theorem 1.2]{yaghouti2009coagulation} and \cite[Lemma 6.3]{GuoPapnicolaouVaradhan1988DiffusionLimit} via the control of the growth of relative entropy with respect to a product measure. We use a discrete approximation of exponential distribution as the product measure, because it allows cancellations in the exchange gradient.

We start with a Lemma showing that the entropy bound~\eqref{eq:entropy-bdd} for the initial data is propagated in time with an at most linear growth.
\begin{lemma}[Propagation of entropy bound]\label{lem:entropy:bound}
	Let $\mu_{N,L,\eps}$ and $\nu_{L,\eps,b}$ be as in the setting of Proposition~\ref{prop:abscty of soln}.
	Let $\rho\geq 0$ and any sequence $\frac{N\eps}{L}\to\rho$ with $\overline{\rho}=\sup_{N\epsilon/L \to \rho} \frac{N\epsilon}{L}$ then there exists $C=C(K,\overline\rho)>0$ such that for any $0\leq s \leq t$, it holds
	\begin{equation}\label{eq:entropy-growth}
		\limsup_{\frac{N\eps}{L} \to \rho} \frac{1}{L} \bra*{H\bigl(\mu_{N,L,\epsilon}(t) | \nu_{L,\epsilon,b} \bigr) - H\bigl(\mu_{N,L,\epsilon}(s) | \nu_{L,\epsilon,b} \bigr) } \leq C(t-s) \,.
	\end{equation}
\end{lemma}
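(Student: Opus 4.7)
The plan is to differentiate $H(\mu_t | \nu_{L,\eps,b})$ along the forward Kolmogorov equation for $\mu_t$ and bound the derivative by a deterministic quantity that is of order $L$. Since the state space is finite for each fixed $(N,L,\eps)$ and the generator $\cL^{N,L,\eps}$ is bounded, a direct calculation is rigorous. Setting $f_t(\eta) = \mathup{d}\mu_t/\mathup{d}\nu$ and writing $c(\eta,\eta')$ for the jump rates of $\cL^{N,L,\eps}$, the standard calculation (swap $\eta \leftrightarrow \eta'$ in one term) followed by $\log(b/a)\leq b/a - 1$ gives
\[
\frac{\mathup{d}}{\mathup{d} t}H(\mu_t|\nu)
= \sum_{\eta,\eta'} c(\eta,\eta')\mu_t(\eta) \log\frac{f_t(\eta')}{f_t(\eta)}
\leq \int \bigl( R^{*\nu}(\eta) - R(\eta)\bigr)\,\mathup{d}\mu_t(\eta),
\]
where $R(\eta) = \sum_{\eta'} c(\eta,\eta')$ is the total outgoing rate at $\eta$ and $R^{*\nu}(\eta) = \sum_{\eta'} c(\eta',\eta)\nu(\eta')/\nu(\eta)$ is the ``adjoint'' incoming rate.

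The key observation is that $g_{\eps,b}(m\eps) = (1-e^{-b\eps})e^{-bm\eps}$, so on any two sites $g_{\eps,b}(\eta_x)g_{\eps,b}(\eta_y)$ depends only on $\eta_x+\eta_y$. Since an exchange $\eta \to \eta^{x\xrightarrow{d\eps} y}$ conserves $\eta_x+\eta_y$, we obtain the crucial cancellation $\nu_{L,\eps,b}(\eta^{x\xrightarrow{d\eps} y})/\nu_{L,\eps,b}(\eta) = 1$. Summing over predecessors $\eta'$ of $\eta$ via $(x,y,d)$-transitions (which requires $\eta_y \geq d\eps$) and then swapping $x\leftrightarrow y$ produces
\[
R^{*\nu}(\eta) - R(\eta)
= \frac{\eps}{L-1} \sum_{x,y=1}^L\sum_{\substack{d\geq 1 \\ d\eps\leq \eta_x}} \bigl[K(\eta_y + d\eps, \eta_x - d\eps, d\eps) - K(\eta_x, \eta_y, d\eps)\bigr].
\]

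Dropping the negative term, applying Assumption~\ref{ass:exist} and using $(1+\eta_x - d\eps)\leq 1+\eta_x$ and $1+\eta_y+d\eps \le (1+\eta_y) + d\eps$, we get
\[
R^{*\nu}(\eta) - R(\eta) \leq \frac{C_K}{L-1}\sum_{x,y=1}^L (1+\eta_x)\Bigl[(1+\eta_y)\,\eps\sum_{d\geq 1}\varphi(d\eps) + \eps\sum_{d\geq 1} d\eps\,\varphi(d\eps)\Bigr].
\]
By Lemma~\ref{lem:uniform-bdd-phi} (applied to the uniformly continuous integrable functions $\varphi$ and $z\mapsto z\varphi(z)$ from~\eqref{z-bdd}), both $\eps\sum_d \varphi(d\eps)$ and $\eps\sum_d d\eps\,\varphi(d\eps)$ are bounded. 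Mass conservation gives $\sum_x(1+\eta_x) = L + N\eps$, so
\[
R^{*\nu}(\eta) - R(\eta) \leq \frac{C}{L-1}\bigl((L+N\eps)^2 + L(L+N\eps)\bigr) \leq C'\, L(1+\overline\rho)^2,
\]
for a constant $C'= C'(K,\overline\rho)$, \emph{uniform in $\eta$}. Integrating from $s$ to $t$ and dividing by $L$ yields~\eqref{eq:entropy-growth}.

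The only genuinely non-routine ingredient is the exchange-invariance of the product-exponential reference measure, which is exactly the reason for choosing $\nu_{L,\eps,b}$ of this form; the rest is bookkeeping of the valid-jump conditions $d\eps\leq \eta_x$ versus $d\eps\leq \eta_y$ handled by the $x\leftrightarrow y$ symmetry of the double sum. It is worth highlighting that the resulting bound is completely independent of second-moment information on $\eta$---mass conservation alone suffices---so Proposition~\ref{prop: propa of 2. moment} is not invoked here.
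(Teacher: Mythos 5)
Your proposal is correct and follows essentially the same route as the paper: bound the entropy production via $\log x\le x$ (equivalently $\log(b/a)\le b/a-1$), exploit the exchange-invariance $\nu_{L,\eps,b}(\eta^{x\xrightarrow{d\eps}y})=\nu_{L,\eps,b}(\eta)$ of the product-exponential reference measure to turn the incoming-rate term into a sum over $K(\eta_y+d\eps,\eta_x-d\eps,d\eps)$, and then close with the growth bound~\eqref{z-bdd} together with mass conservation $\sum_x(1+\eta_x)=L+N\eps$ and the integrability of $\varphi$ and $z\varphi(z)$. The only difference is presentational (adjoint-rate formalism versus the paper's change of variables in the configuration sum), and your closing remark that no moment propagation is needed is consistent with the paper's proof.
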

\begin{proof}
	We denote with $f_{N,L,\epsilon}(t)=\frac{\d \mu_{N,L,\epsilon}(t)}{\d \nu_{L,\epsilon,b}}$ the density with respect to $\nu_{L,\epsilon,b}$. The relative entropy is given by
	\[ H(\mu_{N,L,\epsilon}(t) | \nu_{L,\epsilon,b} ) = \sum_{\eta\in V^{N,L,\epsilon}} \nu_{L,\epsilon,b}(\eta) f_{N,L,\epsilon}(t,\eta) \log \bigl(f_{N,L,\epsilon}(t,\eta)\bigr).\] 
	We can rewrite time differences of the relative entropy in terms of the generator from~\eqref{eq:def:generator:micro} and get
	\[H\bigl(\mu_{N,L,\epsilon}(t) | \nu_{L,\epsilon,b} \bigr) - H\bigl(\mu_{N,L,\epsilon}(s) | \nu_{L,\epsilon} \bigr)=\int_s^t  \sum_{\eta\in V^{N,L,\epsilon}}\nu_{L,\epsilon,b}(\eta) f_{N,L,\epsilon}(r,\eta)\cL^{N,L,\epsilon} \log f_{N,L,\epsilon}(r,\eta) \d r \,.
	\] 
	By using $\log x \le x$, we can for a.e. $r\in (s,t)$ and all $\eta\in V^{N,L,\eps}$ such that $f_{N,L,\epsilon}(r,\eta)>0$ estimate
	\[\begin{split}\cL^{N,L,\epsilon} \log f_{N,L,\epsilon}(r,\eta) &= \frac{1}{L-1} \sum_{x,y=1}^L \sum_{d=1}^{\eta_x / \epsilon} K_{\epsilon}(\eta_x ,\eta_y,d\epsilon) \bigl(\log f_{N,L,\epsilon}(r,\eta^{x\stackrel{d\epsilon}{\to} y}) - \log f_{N,L,\epsilon}(r, \eta) \bigr) \\
		&\le  \frac{1}{L-1} \sum_{x,y=1}^L \sum_{d=1}^{\eta_x / \epsilon} K_{\epsilon}(\eta_x ,\eta_y,d\epsilon) \frac{f_{N,L,\epsilon}(r,\eta^{x\stackrel{d\epsilon}{\to} y})}{ f_{N,L,\epsilon}(r, \eta)} \,,
	\end{split}
	\] 
	where $\eta^{x\stackrel{d\epsilon}{\to} y}_z$ denotes the state after one transition defined in~\eqref{eq:def:jump}.
	Next, we observe that by the definition of $g_{\epsilon,b}$ in~\eqref{eq:def:g:measure}, we have the identity
	\[\begin{split}\gamma^{y,x,z}\cdot \log g_{\epsilon,b} &= -\log g_{\epsilon,b}(x)-\log g_{\epsilon,b}(y)+\log g_{\epsilon,b}(x+z)+\log g_{\epsilon,b}(y-z) \\
		&= \log \bigl( e^{-b\tilde x}\bigr\vert^{y-z + \epsilon}_{y-z}  e^{-b\tilde x}\bigr \vert^{x+z + \epsilon}_{x+z}\bigr) -\log\bigl( e^{-b\tilde x}\bigr\vert^{y + \epsilon}_{y} e^{-b\tilde x}\bigr\vert^{x + \epsilon}_{x} \bigr) \\
		&= \log \frac{e^{-b(y-z)}e^{-b(x+z)}(e^{-b \epsilon }-1)^2}{e^{-by}e^{-bx}(e^{-b \epsilon} -1)^2 } 
		=\log 1 =0.
	\end{split}\]
	In particular, we get for any admissible transition the identity
	\[
		\nu_{L,\epsilon,b}(\eta^{y\stackrel{d\epsilon}{\to} x}) = \nu_{L,\epsilon,b}(\eta) \exp(-b \, \gamma^{\eta_y,\eta_x,d\epsilon} \cdot \log g_{\epsilon,b}) = \nu_{L,\epsilon,b}(\eta)    \,.
	\]
	This implies
	\[\begin{split}
		\MoveEqLeft H\bigl(\mu_{N,L,\epsilon}(t) | \nu_{L,\epsilon,b} \bigr) - H\bigl(\mu_{N,L,\epsilon}(s) | \nu_{L,\epsilon,b} \bigr) \\
		&\le \int_s^t \sum_{\eta\in V^{N,L,\epsilon}} \nu_{L,\epsilon,b}(\eta)   \frac{1}{L-1} \sum_{x,y=1}^L \sum_{d=1}^{\eta_x / \epsilon} K_{\epsilon}(\eta_x ,\eta_y,d\epsilon) f_{N,L,\epsilon}(r,\eta^{x\stackrel{d\epsilon}{\to} y}) \d r \\
			&=\int_s^t \frac{1}{L-1}\sum_{\eta\in V^{N,L,\epsilon}}\sum_{x,y=1}^L \sum_{d=1}^{\eta_y / \epsilon} \nu_{L,\epsilon,b}(\eta)  K_\epsilon(\eta_x + d \epsilon , \eta_y - d \epsilon, d \epsilon) f_{N,L,\epsilon}(r, \eta)  \d r\\
			&\stackrel{\mathclap{\eqref{z-bdd}}}{\le} C_K \int_s^t \frac{1}{L-1}\sum_{\eta\in V^{N,L,\epsilon}} \!\! \mu_{N,L,\epsilon}(r,\eta) \sum_{x,y=1}^L \sum_{d=1}^{\eta_x / \epsilon}    (1+\eta_x +d \epsilon)(1+\eta_y -d \epsilon) \varphi(d\epsilon)\epsilon  \d r\\
			&\le C_K \int_s^t  \frac{1}{L-1} \sum_{\eta\in V^{N,L,\epsilon}}  \mu_{N,L,\epsilon}(r,\eta) \sum_{x,y=1}^L \sum_{d=1}^{\eta_x / \epsilon}  \bigl((1+\eta_x)(1+\eta_y) + d\epsilon (1+\eta_y\bigr)) \varphi(d \epsilon) \epsilon \d r \\
			&\le C_K \int_s^t \frac{1}{L-1} \sum_{\eta\in V^{N,L,\epsilon}} \mu_{N,L,\epsilon}(r,\eta) \biggl[  (L+N\epsilon )^2 \sum_{d=1}^N \varphi(d \epsilon) \epsilon + L (L+N\epsilon) \sum_{d=1}^N d\epsilon  \varphi(d \epsilon) \epsilon\biggr] \d r \\
			&\le C_K  \frac{(L+N\epsilon )^2}{L-1}  \int_s^t \sum_{\eta\in V^{N,L,\epsilon}}\!\! \mu_{N,L,\epsilon}(r,\eta) \dx r  \, \eps \sum_{d=1}^N (1+d\eps) \varphi(d\eps)  \,.
	\end{split}\]
	The conclusion follows by noting that $\frac{L+N\epsilon }{L}\leq 1+\bar\rho$, that $\mu_{N,L,\epsilon}(r)(V^{N,L,\eps}) = 1$ for almost every $r\in (s,t)$ and that $\limsup_{\eps\to0} \eps \sum_{d=1}^N (1+d\eps) \varphi(d\eps)<\infty$ thanks to Assumption~\ref{ass:exist}.
\end{proof}
The entropy bound~\eqref{eq:entropy-growth} from Lemma~\ref{lem:entropy:bound} is with a Sanov's large deviation principle the essential ingredient for the proof of Proposition~\ref{prop:abscty of soln}.
\begin{proof}[Proof of Proposition~\ref{prop:abscty of soln}]
The proof is based on showing a relative entropy bound for the time marginals with respect to the exponential measure $\lambda_b(\dx{x}) = b e^{-bx} \dx{x}$ on $\bbR_+$, that is for some $C>0$
\begin{equation}\label{eq:entropy:bound}
	\bE_t\bigl[ H(\cdot|\lambda_b )\bigr] \le C(1+t) \qquad\text{ for almost all $t\geq 0$,}
\end{equation} where $\bE_t$ is with respect to $\bP_\#\pi_t$ with $\pi_t$ being the time projection at time $t$.

The estimate~\eqref{eq:entropy-growth} implies for Lebesgue almost all $t\geq 0$, that $\bP_t$ is concentrated on probability measures on~$\bbR_+$ being absolutely continuous with respect to the exponential measure and in particular the Lebesgue measure on $\bR_+$.

For doing so, we consider the random empirical measure  $C^L(\eta) = \frac{1}{L} \sum_{x=1}^L \delta_{\eta_x} \in \cP(\bR_+)$ for $\eta \sim \nu_{L,\epsilon,b}$. 
By viewing $\mathcal{P}(\eps \bbN_0)\subset \calP(\bbR_+)$, it can be shown readily that $g_{\eps,b}$ defined in~\eqref{eq:def:g:measure} satsifies $g_{\epsilon,b}(\cdot ) \to \lambda_b$ weakly as probability measures on $\cP(\bR_+)$ for $\epsilon \to 0$, that is with respect to bounded uniformly continuous functions on $\bR_+$.

Then, the law $W_{L,\epsilon}= C^L_{\#}\nu_{L,\epsilon,b} \in \cP(\cP(\bN_0 \epsilon)) \subset \cP(\cP(\bR_+))$ satisfies a large deviation principle with speed $L$ and rate $H(\cdot| \lambda_b)$ by a generalized Sanov's theorem in \cite[Theorem 6.1]{Mariani2018}. By Varadhan's lemma \cite[Theorem III.13]{hollander2000large},  for any $u$ bounded continuous functions on $\cP(\bR_+)$, we have along a sequence of $\epsilon(L)\to 0$ as $L \to \infty$
\begin{equation}\label{eq:Sanov}
	\lim_{L\to \infty} \frac{1}{L} \log \sum_{\mathclap{\eta\in V^{N,L,\epsilon}}} e^{L u(C^L(\eta))} \nu_{L,\epsilon,b}(\eta) = \lim_{L\to \infty} \frac{1}{L} \log \sum_{\mathclap{c\in \hat{V}^{N,L,\epsilon}}} e^{L u(c)} W_{L,\epsilon}(c) = \sup_{\omega \in \cP(\bR_+)} \bigl\{u(\omega) - H(\omega | \lambda_b) \bigr\} \,. 
\end{equation}
We construct a sequence $(u_{k,j})$ of bounded continuous functions by fixing for each $k\geq 0$ a countable dense subset $(f_{k,i})_{i\in \bN}$ of $C([0,k])$ by the Weierstrass approximation theorem.  With that, we define for $\omega \in \cP(\bR_+)$
\[
	u_{k,j}(\omega)= \sup_{1\le i\le j} \biggl\{ \int f_{k,i}(x) \omega(\d x) - \log \int e^{f_{k,i}(x)} \lambda_b(\d x)\biggr\} \,.
\] 
We may assume that $f_{k,1}$ is the constant function $1$ on $[0,k]$ so that $u_{k,j}\ge -1$ for $k\ge 0, j\in\bN$. In addition, we also define $u_k : \cP(\bR_+) \to \bR$ by
\begin{equation*}
	u_k(\omega)= \sup_{ i\in \bN}\biggl\{ \int f_{k,i}(x) \omega(\d x) - \log \int e^{f_{k,i}(x)} \lambda_b(\d x)\biggr\} \,.
\end{equation*} 
This choice is motivated by the variational characterization of the relative entropy \cite[Theorem 3.3]{rezakhanlou2015lectures}, which allows for any $\omega\in \cP(\bR_+)$ the estimate
\[\begin{split}
	H(\omega| \lambda_b) &= \sup_{f \in C_b ([0,\infty))}\biggl\{ \int f(x) \omega(\d x) - \log \int e^{f(x)} \lambda_b(\d x)\biggr\} \\
	&\geq \sup_{f \in C_b ([0,k])}\biggl\{ \int f(x) \omega(\d x) - \log \int e^{f(x)} \lambda_b(\d x)\biggr\}\\
	& \ge u_k(\omega)\ge u_{k,j}(\omega) \qquad \text{ for all } j\in\bN, k\ge 0 \,.
\end{split}\]
By construction and Lusin's theorem on sets of finite measure, we can approximate every of the three inequalities above arbitrary well and have the identity $H(\omega|\lambda_b)= \sup_{k\geq 0}\sup_{j\in \bbN} u_{k,j}(\omega)$.

We note, that $\omega \mapsto \int f(x) \omega(\d x) - \log \int e^{f(x)} \lambda_b(\d x) $ is continuous for each $f \in C_b([0,k])$ so that~$u_{k,j}$, as the supremum of a finite set of continuous function, is also continuous in the topology weak convergence of probability measures.

With this preliminary considerations, we can use the weak convergence of $\bP^{N,L,\epsilon} \to \bP$ and once more the dual representation for the relative entropy, to estimate
\[\begin{split}
	\bE_t[ u_{k,j}] &= \lim_{\frac{N\epsilon}{L}\to\rho} \bE_t^{N,L,\epsilon}[u_{k,j}] = \lim_{\frac{N\epsilon}{L}\to\rho}\sum_{c\in \hat{V}^{N,L,\epsilon}} u_{k,j} (c) C^{L}_{\#}\mu_{N,L,\epsilon}(t,c)\\
&= \lim_{\frac{N\epsilon}{L}\to\rho} \sum_{\eta\in V^{N,L,\epsilon}} u_{k,j} \bigl(C^L(\eta)\bigr) 
\pderiv{\mu_{N,L,\epsilon}(t,C^L(\eta))}{\nu_{L,\epsilon,b}(\eta)} \nu_{L,\epsilon,b}(\eta)\\
& \le \lim_{\frac{N\epsilon}{L}\to\rho} \biggl( \frac{1}{L} \log \sum_{\eta\in V^{N,L,\epsilon}}  \exp\Bigl(L u_{k,j}\bigl(C^L(\eta)\bigr)\Bigr)  \nu_{L,\epsilon,b}(\eta)   + \frac1L H\bigl(\mu_{N,L,\epsilon}(t) | \nu_{L,\epsilon,b}\bigr) \biggr) \\
&\le \sup_{\omega \in \cP(\bR_+)}   \bigl\{u_{k,j}(\omega) - H(\omega | \lambda_b) \bigr\} + C t+ \limsup_{\frac{N\epsilon}{L} \to \rho}\frac1L H\bigl(\mu_{N,L,\epsilon}(0) | \nu_{L,\epsilon,b} \bigr). \end{split}\] for almost all $t\geq 0$, where we used \eqref{eq:Sanov} and the estimate~\eqref{eq:entropy-growth} from Lemma~\ref{lem:entropy:bound} in the last inequality.
Hence, we conclude by monotone convergence the estimate
\[\bE_t[ u_{k}]=  \lim_{j \to \infty}  \bE_t[u_{k,j}] \le  C(1+ t) \,. \] 
We conclude by letting $\epsilon>0$ and use the definition of supremum to argue that for each $f \in C_b([0,\infty))$, we have by Fatou's lemma the bound
\[H(\omega|\lambda_b) \le \liminf_{k\to\infty} u_k(\omega) + \epsilon,\] 
Hence, we conclude the claimed estimate~\eqref{eq:entropy:bound} and so the proof.
\end{proof}

\begin{lemma}[Construction of initial measure]\label{lemma:density}
	Suppose $\hat{V}^{N,L,\epsilon} \ni c^{N,L,\epsilon} \to c_0$ in $W_1$ for some $c_0\in \calP_\rho(\bbR_+)$ satisfies for some $\kappa>0$ the bound
	\begin{equation*}
		\liminf_{\frac{N\epsilon}{L} \to \rho} \Bigl( \epsilon \bigl|{C^L}^{-1}(c^{N,L,\epsilon})\bigr|^{1/L}\Bigr) \geq \kappa >0  \,,
		\quad\text{where}\quad
		 |{C^L}^{-1}(c^{N,L,\epsilon})|= \frac{L!}{\prod_{k=0}^N (c^{N,L,\epsilon}({k\epsilon})L)!} \,,
	\end{equation*}
	then there exists $\tilde \mu_{N,L,\eps} \in \calP(V^{N,L,\eps})$ such that $C^L_\#\tilde \mu_{N,L,\epsilon}= \delta_{c^{N,L,\epsilon}} \to \delta_{c_0}$ and for any $b>0$ it holds
	\[
		\limsup_{N\epsilon/L\to \rho} \frac1L H\bigl(\tilde \mu_{N,L,\epsilon}| \nu_{L,\epsilon,b}\bigr) <+\infty \,.
	\]
\end{lemma}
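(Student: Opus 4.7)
The natural candidate is to take $\tilde\mu_{N,L,\epsilon}$ to be the \emph{uniform} probability measure on the fiber $(C^L)^{-1}(c^{N,L,\epsilon})\subset V^{N,L,\epsilon}$. This choice makes $C^L_\#\tilde\mu_{N,L,\epsilon}=\delta_{c^{N,L,\epsilon}}$ automatic, and the convergence $\delta_{c^{N,L,\epsilon}}\to\delta_{c_0}$ then follows from the assumption $c^{N,L,\epsilon}\to c_0$ in $W_1$. Conceptually this is the entropy-minimizing lift among probability measures on $V^{N,L,\epsilon}$ whose pushforward under $C^L$ equals $\delta_{c^{N,L,\epsilon}}$, so it is the natural candidate for controlling $H(\,\cdot\,|\nu_{L,\epsilon,b})$.

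Because $\tilde\mu_{N,L,\epsilon}$ is constant on its support and $\nu_{L,\epsilon,b}$ is a product measure, the relative entropy splits cleanly as
\[
H(\tilde\mu_{N,L,\epsilon}|\nu_{L,\epsilon,b})
= -\log\bigl|(C^L)^{-1}(c^{N,L,\epsilon})\bigr| - L\sum_{k=0}^N c^{N,L,\epsilon}(k\epsilon)\log g_{\epsilon,b}(k\epsilon),
\]
where the equality uses that every $\eta$ in the fiber is a coordinate-permutation carrying the same multiset, so $\sum_{i=1}^L \log g_{\epsilon,b}(\eta_i) = L \sum_k c^{N,L,\epsilon}(k\epsilon) \log g_{\epsilon,b}(k\epsilon)$. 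From~\eqref{eq:def:g:measure} one computes explicitly $g_{\epsilon,b}(k\epsilon) = e^{-bk\epsilon}(1-e^{-b\epsilon})$, so the second term evaluates to $L\bigl(b\,\cM_1(c^{N,L,\epsilon}) - \log(1-e^{-b\epsilon})\bigr)$.

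Dividing by $L$ and invoking the combinatorial hypothesis in the form $-\frac1L\log|(C^L)^{-1}(c^{N,L,\epsilon})| \leq \log(\epsilon/\kappa) + o(1)$ along the subsequence $\frac{N\epsilon}{L}\to\rho$, I obtain
\[
\frac1L H(\tilde\mu_{N,L,\epsilon}|\nu_{L,\epsilon,b})
\leq \log\frac{\epsilon}{1-e^{-b\epsilon}} - \log\kappa + b\,\cM_1(c^{N,L,\epsilon}) + o(1).
\]
Since $\log\bigl(\epsilon/(1-e^{-b\epsilon})\bigr)\to -\log b$ as $\epsilon\to 0$ and $\cM_1(c^{N,L,\epsilon}) = N\epsilon/L\to\rho$, the right-hand side stays bounded, giving the claim. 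No substantive obstacle arises once the fiber-uniform lift is identified; the combinatorial hypothesis is precisely the quantitative counterpart needed to offset the divergence $-\log(1-e^{-b\epsilon}) \sim -\log(b\epsilon)$ produced by the discretization of the exponential reference measure, while the $b\cM_1$ term stays uniformly bounded thanks to the prescribed mass $N\epsilon/L\to\rho$.
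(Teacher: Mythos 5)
Your proposal is correct and follows essentially the same route as the paper: the uniform lift on the fiber $(C^L)^{-1}(c^{N,L,\epsilon})$, the product structure of $\nu_{L,\epsilon,b}$ splitting the entropy into the combinatorial term and the $\log g_{\epsilon,b}$ term, and the hypothesis on $\epsilon\,|(C^L)^{-1}(c^{N,L,\epsilon})|^{1/L}$ cancelling the $-\log(1-e^{-b\epsilon})\sim-\log(b\epsilon)$ divergence. Your explicit evaluation $g_{\epsilon,b}(k\epsilon)=e^{-bk\epsilon}(1-e^{-b\epsilon})$ is in fact slightly cleaner than the paper's intermediate step, yielding the bound $-\log\kappa-\log b+b\rho$.
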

\begin{proof}
Since $\hat{V}^{N,L,\epsilon} \ni c^{N,L,\epsilon} \to c_0$ in $W_1$, we get $\delta_{c^{N,L,\epsilon}} \to \delta_{c_0}$ weakly on $\cP(\cP^1)$. Let $\tilde \mu_{N,L,\epsilon}$ be the uniform probability measure on ${C^L}^{-1}(c^{N,L,\epsilon})= \{\eta\in V^{N,L,\epsilon} : C^L(\eta) = c^{N,L,\epsilon}\}$, 
\[\tilde\mu_{N,L,\epsilon}(\eta) = |{C^L}^{-1}(c^{N,L,\epsilon})|^{-1} \1_{{C^L}^{-1}(c^{N,L,\epsilon})} (\eta), \quad \text{ where } |{C^L}^{-1}(c^{N,L,\epsilon})|= \frac{L!}{\prod_{k=0}^N (c^{N,L,\epsilon}({k\epsilon})L)!} \] By definition, $C^L_{\#}\tilde\mu_{N,L,\epsilon}= \delta_{c^{N,L,\epsilon}}$ and for $\eta \in {C^{L}}^{-1}(c^{N,L,\epsilon})$, $\nu_{L,\epsilon,b}(\eta)= \prod_{i\in\bN_0} g_{\epsilon,b}(i \epsilon)^{c^{N,L,\epsilon}(i\epsilon) L}$. We have
\begin{equation*}\begin{split}\frac1L H(\tilde\mu_{N,L,\epsilon}| \nu_{L,\epsilon,b}) &=\frac1L\sum_{\eta \in {C^{L}}^{-1}(c^{N,L,\epsilon})} \tilde\mu_{N,L,\epsilon}(\eta) \log\frac{\tilde\mu_{N,L,\epsilon}(\eta)}{\nu_{L,\epsilon,b}(\eta)}\\
& =-\frac1L \log \Bigl(\bigl|{C^{L}}^{-1}(c^{N,L,\epsilon})\bigr| \prod_{i\in\bN_0} g_{\epsilon,b}(i\epsilon)^{c^{N,L,\epsilon}(i\epsilon) L} \Bigr)\\
& =-\log \bigl|{C^{L}}^{-1}(c^{N,L,\epsilon})\bigr|^{\frac1L }  -  \sum_{i=0}^N   c^{N,L,\epsilon}(i\epsilon) \log g_{\epsilon,b}(i \epsilon)\\
&= -\log \bigl|{C^{L}}^{-1}(c^{N,L,\epsilon})\bigr|^{\frac1L } - \sum_{i=0}^N c^{N,L,\epsilon}(i\epsilon) \log \bigl(b \epsilon e^{- i \epsilon}\bigr)  + o(1)_{\epsilon \to 0}\\
&=- \log \epsilon\bigl|{C^{L}}^{-1}(c^{N,L,\epsilon})\bigr|^{\frac1L }   -\log b  + \frac{N\epsilon}{L}  + o(1)_{\epsilon \to 0}.\end{split}\end{equation*}
Hence, we conclude
\[
\limsup_{N\epsilon/L\to\rho}\frac1L H(\tilde\mu_{N,L,\epsilon}| \nu_{L,\epsilon,b}) \le -\log \kappa -\log b + \rho < +\infty.
\]

\end{proof}
\begin{remark}
Note that
 \[\log L + O(1) =L^{-1}\log L!  \ge \log|{C^{L}}^{-1}(c^{N,L,\epsilon})\bigr|^{\frac1L } \ge 0.\]  
In the case $\frac{N}{L} \in \bN$, the zero is attained if  $c^{N,L,\epsilon}$ is concentrated at $\frac{N}{L}\epsilon$, where we use the convention $0!=1$, and the maximum $ \log L!$ is attained  if $c^{N,L,\epsilon}$ is evenly distributed over $L$ points  with mass $1/L$ and the center of mass is at $\frac{N}{L} \epsilon$. In this way, this quantity measures the spread of the measure $c^{N,L,\epsilon}$.
\end{remark}
\begin{corollary}\label{cor:densitfy:sufficient}
Let $c\in \cP_{\rho}(\bR_+)$  with uniformly continuous and uniformly bounded Lebesgue density. Then there exists $ \hat{V}^{N,L,\epsilon}\ni c^{N,L,\epsilon}  \to c$ in $W_1$ as $\frac{N\epsilon}{L}\to \rho$ with 
\begin{equation}\label{eq:densitfy:sufficient}
	\liminf_{\frac{N\epsilon}{L} \to \rho} \Bigl( \epsilon \bigl|{C^L}^{-1}(c^{N,L,\epsilon})\bigr|^{1/L}\Bigr) \geq \|c\|_\infty^{-1}>0 \,,
\end{equation}  
where $\|c\|_{\infty}$ is the supremum norm of the density of $c$.
\end{corollary}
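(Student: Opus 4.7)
The plan is to construct $c^{N,L,\eps}$ as a direct discretization of the density $c$ and then to extract~\eqref{eq:densitfy:sufficient} from Stirling's formula combined with the pointwise inequality $-\int c\log c\ge -\log\|c\|_\infty$. Concretely, I would set $n_k=\lfloor L\int_{k\eps}^{(k+1)\eps} c(x)\,\dx{x}\rfloor$ for $0\le k\le N$, which gives $\sum_k n_k\le L$, and then correct by redistributing at most $O(1)$ units among finitely many indices so that both $\sum_k n_k=L$ and $\sum_k k\, n_k=N$ hold. The uncorrected discrete first moment $L^{-1}\sum_k k\eps\, n_k$ is automatically close to $\int x c(\dx x)=\rho$ by uniform continuity of $c$, and the target $N\eps/L$ converges to $\rho$ by assumption, so only a small correction is needed. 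Setting $c^{N,L,\eps}(k\eps)=n_k/L$ places $c^{N,L,\eps}$ in $\hat V^{N,L,\eps}$, and the convergence $c^{N,L,\eps}\to c$ in $W_1$ follows from the $L^1$-convergence of the corresponding tail distributions, which is routine from uniform continuity and boundedness of $c$.

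For the multinomial bound, I would apply the standard Stirling inequalities $n\log n - n + 1\le \log n!\le n\log n - n + 1 + \log n$ for $n\ge 1$, together with $\log 0!=0$, to obtain
\[
\frac{1}{L}\log\bigl|{C^L}^{-1}(c^{N,L,\eps})\bigr|\ge H(p^{N,L,\eps}) - \frac{M + \sum_{n_k\ge 1}\log n_k}{L},
\]
where $p^{N,L,\eps}_k=n_k/L$ and $M=|\{k:n_k\ge 1\}|$. With $n_k\approx L\eps\, c(k\eps)$ via uniform continuity, the identity
\[
\log\eps + H(p^{N,L,\eps}) = -\sum_k p^{N,L,\eps}_k\log\bigl(p^{N,L,\eps}_k/\eps\bigr) \;\longrightarrow\; -\int_0^\infty c(x)\log c(x)\,\dx{x}
\]
holds as $\eps\to 0$ along the admissible scaling, since the right-hand side of the equality is a Riemann sum for the differential entropy of $c$. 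The conclusion~\eqref{eq:densitfy:sufficient} then follows from the elementary inequality $-\int c\log c\ge -\log\|c\|_\infty \int c = \log\|c\|_\infty^{-1}$, which holds because $c(x)\le \|c\|_\infty$ pointwise.

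The main technical obstacle is making the Stirling correction $L^{-1}\bigl(M + \sum_{n_k\ge 1}\log n_k\bigr)$ negligible, since $M$ can in principle be as large as $N+1$ and the second summand of order $M\log L$. I would address this with a preliminary truncation, replacing $c$ by its restriction to $[0,R(\eps)]$ (suitably renormalized) with $R(\eps)\to\infty$ chosen slowly enough that $R(\eps)\log L/(\eps L)\to 0$ along the subsequence; the finiteness of the first moment and boundedness of $c$ guarantee that the truncation error vanishes in both $W_1$ and in the differential entropy. The flexibility to take $\eps\to 0$ sufficiently slowly within the regime $N\eps/L\to\rho$, already exploited in Proposition~\ref{prop:tightness in dexg} and Proposition~\ref{prop:ident-limit}, ensures that such a compatible truncation rate exists, and the three ingredients---discretization, Stirling, and the pointwise density bound---then combine to yield the stated lower bound.
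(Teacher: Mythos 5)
Your construction of $c^{N,L,\eps}$ (floor-function discretization of the density, followed by a small mass correction to enforce $\sum_k n_k=L$ and $\sum_k k\,n_k=N$) and your appeal to Stirling's formula coincide with the paper's proof; the genuine divergence is in how the discrete entropy $H(p^{N,L,\eps})=-\sum_k p_k\log p_k$ is bounded from below. The paper never passes to the differential entropy: it simply uses $p_k=c^{N,L,\eps}(k\eps)\le\int_{k\eps}^{(k+1)\eps}c(x)\,\dx{x}\le\|c\|_\infty\eps+o(\eps)$ together with $\sum_k p_k=1$ to get $-\sum_k p_k\log p_k\ge-\log\bigl(\|c\|_\infty\eps+o(\eps)\bigr)$ in one line. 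This short-circuits exactly the step where your argument is thinnest, namely the asserted convergence $\log\eps+H(p^{N,L,\eps})\to-\int_0^\infty c\log c\,\dx{x}$: on an unbounded domain, with the floor annihilating every cell where $L\int_{k\eps}^{(k+1)\eps}c<1$, this requires a two-sided comparison and tail control, even though in the end you only use a one-sided consequence of it. I would therefore replace the differential-entropy limit by the pointwise sup-norm bound, after which the truncation is no longer needed for the main term. On the other hand, your explicit bookkeeping of the Stirling remainder $L^{-1}\bigl(M+\sum_{n_k\ge1}\log n_k\bigr)$ is more careful than the paper's blanket $O(L^{-1}\log L)$; note, though, that $M\le L$ always (each nonzero $n_k$ is at least $1$ and they sum to $L$), so by concavity the remainder is at most $\tfrac{M}{L}\bigl(1+\log\tfrac{L}{M}\bigr)=O(1)$ rather than your pessimistic $O(N\log L/L)$, and it does vanish under your truncation or in any regime with $R\log L/(L\eps)\to0$. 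In summary the proposal is correct in substance and reaches the right constant, but the detour through $-\int c\log c$ adds avoidable work compared with bounding the discretized density directly by $\|c\|_\infty\eps$.
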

\begin{proof}
Let $c^{L,\epsilon}(k\epsilon)= \frac1L \bigl\lfloor L \int_{k\epsilon}^{(k+1)\epsilon} c(x) \d x  \bigr\rfloor$ for $k\in\bN_0$.  Define $a_{L,\epsilon} = 1 - \sum_{k\in\bN_0}c^{L,\epsilon}(k\epsilon) \in\frac1L \bN_0$ and $b_{L,\epsilon} =L (\frac{N}{L} -\sum_{k\in\bN_0}kc^{L,\epsilon}(k\epsilon) ) \in\bN_0$.  In the case for  $a_{L,\epsilon}\geq \frac{1}{L}$, we define $c^{N,L,\epsilon}=c^{L,\epsilon}+ \frac{1}{L} \delta_{b_{L,\epsilon}\epsilon}+\bigl(a_{L,\eps}-\frac1{L}\bigr)\delta_0$. Otherwise if $a_{L,\epsilon}<\frac{1}{L}$, hence $a_{L,\eps}=0$ and so $c^{L,\epsilon}\in \cP(\bN_0\epsilon)$, 
we define $c^{N,L,\epsilon} = c^{L,\epsilon} - \frac{1}{L}\delta_{k'\epsilon}+\frac{1}{L}\delta_{(k'+b_{k,\epsilon})\epsilon}$ where $k' =\inf\{k: c^{L,\epsilon}(k \epsilon)>0\}$. Then $c^{N,L,\epsilon} \in \hat{V}^{N,L,\epsilon}$ and $c^{N,L,\epsilon} \to c$ weakly. Since $\cM_1(c^{N,L,\epsilon})=N\epsilon/L \to \rho=\cM_1(c) $, we also have $c^{N,L,\epsilon} \to c$ in~$W_1$.
 
We want to show that the constructed sequence $(c^{N,L,\eps})$ satisfies~\eqref{eq:densitfy:sufficient}
by showing
\begin{align}\label{eq:log-c-bdd}
\liminf_{N\epsilon/L \to \rho} \log \biggl(\epsilon\bigl|{C^L}^{-1}(c^{N,L,\epsilon})\bigr|^{1/L} \biggr)\ge \log \|c\|_\infty^{-1}.
 \end{align}
By Stirling's formula,  we have 
\begin{align*}\frac1L \biggl( \log L!& - \sum_{k=0}^N \log (c^{N,L,\epsilon}(k\epsilon)L)! \biggr) =  \sum_{k=0}^N c^{N,L,\epsilon}(k\epsilon)  \bigl( \log L -\log (c^{N,L,\epsilon}(k\epsilon)L)\bigr) + O(L^{-1}\log L)\\
&=- \sum_{k=0}^N c^{N,L,\epsilon}(k\epsilon)  \log (c^{N,L,\epsilon}(k\epsilon)) + O(L^{-1}\log L). \end{align*}
The uniform continuity of $c$ implies the estimate 
\[c^{L,\epsilon}(k\epsilon)\le \int_{k\epsilon}^{(k+1)\epsilon} c(x) \d x = c(k\epsilon)\epsilon + o(\epsilon).\] With the uniform boundedness of $c$, we obtain 
\[\|c^{N,L,\epsilon}\|_\infty \le \|c^{L,\epsilon}\|_\infty \le \|c\|_\infty \epsilon + o(\epsilon).\] 
Hence, we conclude
\begin{equation*}
	-\sum_{k=0}^N c^{N,L,\epsilon}(k\epsilon)  \log (c^{N,L,\epsilon}(k\epsilon))
	\ge  \log (\|c\|_\infty \epsilon + o(\epsilon))^{-1}  \,.	
\end{equation*}  
 Applying the lower bound to $\log  \epsilon\bigl|{C^L}^{-1}(c^{N,L,\epsilon})\bigr|^{1/L} $ and taking the $\liminf $ of the expression, we obtain ~\eqref{eq:log-c-bdd}.
\end{proof}
\section{Uniqueness of weak solutions}

In this section, we prove Theorem~\ref{thm:unique}. 
The proof is an adaptation of the uniqueness arguments for the EDG in~\cite{Schlichting2019}.

In this section, we use the physics convention to write the differential right after the integral sign in front of the integrand, that is
\begin{equation*}
	\int_0^\infty \int_0^\infty f(x,y) \dx{y} \dx{x} = \int_0^\infty \dx{x} \int_0^\infty \dx{y} \, f(x,y) \,.
\end{equation*}
\begin{proof}[Proof of  Theorem~\ref{thm:unique}]
Suppose $c, d$ are two solutions of \eqref{eq:weak-sol} having Lebesgue densities and the same initial conditions. 
In the following, we identify the Lebesgue densities with the measures. 
For $k \in \bR_+$, we define
\[E(k)= \int_k^\infty \d x \, (c(x) - d(x))
\] so that
$W_1(c,d) = \int_0^\infty \d k |E(k)|$. 
By an approximation, we can find Lipschitz functions $f_n \to \1_{[k,\infty)}$ pointwise and in~$L^1$, which allows us to use $\1_{[k,\infty)}$ as a test function to get the identity
\[ \frac{\d}{\d t} E_t(k) = \int_0^\infty \d l \,\1_{[k,\infty)} (l) (Qc_t(l) - Qd_t(l)).\] 
Then, recalling the definition~\eqref{eq:def:generator} of the generator, we get 
\[\begin{split}
	Qc&=  \int_0^\infty\d x  \int_0^\infty \d y   \int_0^x \d z\,c(x)c(y) K(x,y,z) \gamma^{x,y,z}\\
	&= \int_0^\infty\d z  \int_0^\infty \d y   \int_z^\infty \d x \,c(x)c(y) K(x,y,z) \gamma^{x,y,z}
\end{split}\] 
and 
\[\begin{split} \int_0^\infty \d l \; \1_{[k,\infty)} (l) Qc(l)
=  & \int_0^\infty\d z  \int_0^\infty \d y  \int_z^\infty \d x  \; c(x)c(y)K(x,y,z)\\
&\qquad \bigl(-\1_{[k,\infty)}(x) - \1_{[k,\infty)}(y)  + \1_{[k,\infty)}(x-z)+ \1_{[k,\infty)}(y+z)\bigr).
\end{split}\]
We introduce the abbreviations
\[
A[c](y,z)= \int_z^\infty \d x \,c(x) K(x,y,z) 
\qquad\text{and}\qquad
B[c](x,z)= \int_0^\infty \d y \,c(y) K(x,y,z) \,,
\]
as well as $J[c](y,z)= A[c](y,z)c(y)- B[c](y+z,z)c(y+z)$. 
Then, by combining the first and third as well as second and fourth indicator, respectively and after a change of variables, we get
\[\begin{split}
\int_0^\infty \d l \,\1_{[k,\infty)}(l) Qc(l) &=   \int_0^\infty \d z   \biggl(-\int_{\max(z,k)}^{z+k} \d x \,c(x)  B[c](x,z)+ \int_{\max(k-z,0)}^{k} \d  y \,c(y)   A[c](y,z)\biggr) \\
& = \int_0^\infty \d z \int_{\max(0,k-z)}^k \d y \,\bigr(c(y) A[c](y,z) - c(y+z) B[c](y+z,z)\bigr)  \\
&=\int_0^\infty \d z \int_{\max(0,k-z)}^k \d y \,J[c](y,z). 
 \end{split}\]
Note that $e(y)= c(y)-d(y) = -E'(y)$. By integration by parts,
 \[\begin{split}
  &\frac{\d}{\d t} E(k)  = \int_0^\infty \d z \int_{\max(k-z,0)}^k \d y \,(J[c](y,z)- J[d](y,z)) \\
&= \int_0^\infty \d z \int_{\max(k-z,0)}^k \d y \Bigl( A[e](y,z) c(y) - B[e](y+z,z) c(y+z) \\
&\hskip12em + A[d](y,z) e(y) - B[d](y+z,z) e(y+z)\Bigr) \\
&= \int_0^\infty \d z \int_{\max(k-z,0)}^k \d y  \Bigl( A[e](y,z) c(y) - B[e](y+z,z) c(y+z) \Bigr) \\
&\quad +\int_0^\infty \d z  \int_{\max(k-z,0)}^k \d y \Bigl( \partial_1 A[d](y,z) E(y) - \partial_1 B[d](y+z,z) E(y+z) \Bigr) \\
& \quad +\int_0^\infty \d z  \Bigl(- A[d](y,z)E(y)+ B[d](y+z,z) E(y+z) \Bigr)\Big\vert^{y=k}_{y=\max(k-z,0)}.
\end{split}\]
Noting that $E(0)=\int_0^\infty \d k \, c(k) - \int_0^\infty \d k \, d(k) = 1-1=0$ and $B[d](z,z)=0$ from Assumption~\ref{ass:unique}, the last integral is then simplified to
\[\begin{split} 
	\MoveEqLeft \int_0^\infty \d z  \Bigl(- A[d](y,z)E(y)+ B[d](y+z,z) E(y+z) \Bigr)\Big\vert^{y=k}_{y=\max(k-z,0)}\\
&= \int_0^\infty \d z \Bigl( - A[d](k,z)E(k)+ B[d](k+z,z) E(k+z)\Bigr) \\
	&\quad + \int_0^k \d z \Big(A[d](k-z,z)E(k-z) - B[d](k,z) E(k)\Big).
\end{split}\]
By another integration by parts, we get
\[\begin{split}
	A[e](y,z) &= \int_0^\infty \d x \,e(x) K(x,y,z) = -\int_0^\infty \d x \,E'(x) K(x,y,z)\\
& = - E(x)K(x,y,z)\big\vert^{x=+\infty}_{x=0} + \int_0^\infty \d x \,E(x) \partial_1 K(x,y,z) =\int_0^\infty \d x \,E(x) \partial_1 K(x,y,z),
\end{split}\]
where the boundary term vanishes because by the boundedness of the first and zeroth moments, it holds
\[ (1+x)  \int_x^\infty \d k \,c(k) \le \int_x^\infty \d k \,c(k)  (1+k) \to 0 
\quad\text{ as }\quad  x\to +\infty \]
and $E(0)= 0.$ Similarly, we find 
\[ B[e](y,z) = \int_0^\infty \d x \,E(x) \partial_2 K(y,x,z) \,. \]
In summary, we arrive at 
\[\begin{split}\frac{\d |E(k)|}{\d t}&= \operatorname{sign} (E(k)) \frac{\d E(k)}{\d t} \\
&\le \int_0^\infty \d z \int_{\max(k-z,0)}^k \d y  \Bigl( |A[e](y,z)|c(y) + |B[e](y+z,z)|c(y+z) \\
&\hskip12em + |\partial_1 A[d](y,z)| |E(y)|+ |\partial_1 B[d](y+z,z)| |E(y+z)| \Bigr) \\
&\quad + \int_0^\infty \d z \Bigl( - |E(k)| A[d](k,z) + |E(k+z)| B[d](k+z,z) \Bigr) \\
&\quad +\int_0^k \d z \Bigl( |E(k-z)| A[d](k-z,z)  - |E(k)|B[d](k,z) \Bigr) .
\end{split}\]
To estimate $\int_0^\infty \d k  \frac{\d |E(k)|}{\d t}$, we begin with using the assumption \eqref{ass: 1-diff K},  $|\partial_1  K(x,y,z)|\le(1+y) \varphi(z)$. 
Moreover, we use that for $\varphi \ge 0, \varphi\in L^{1,1}$, it holds
\[ 
	\int_0^\infty \d k \int_{k}^\infty \d z \, \varphi(z) = \int_0^\infty \d z \, \varphi(z)  z,
	\qquad\text{ and }\qquad
	y+z - \max(z,y) \le z \,,
\]  
as well as the conversation of mass and the first moment for $c\in \mathcal{P}_\rho$, it holds
\[
	\int_0^\infty (1+y)c(y) \d y = 1+ \cM_1(c) = 1+ \rho \,.
\]
With this preliminary considerations and by carefully interchanging the integrals keeping track of the integration domains, we get
\[\begin{split}
	\MoveEqLeft \int_0^\infty \d k \int_0^\infty \d z \int_{\max(k-z,0)}^k \d y \, 
	\bigl\lvert A[e](y,z)\bigr\rvert c(y) \\
	&\le \int_0^\infty \d k  \int_0^\infty \d z \int_{\max(k-z,0)}^k \d y  \int_0^\infty \d x \, \bigl\lvert E(x)\bigr\rvert \, \bigl\lvert \partial_1 K(x,y,z)\bigr\rvert c(y)\\
& \le \int_0^\infty \d x \, \bigl\lvert E(x)\bigr\rvert \int_0^\infty \d k
 \biggl[ \int_0^k \d z \, \varphi(z) \int_{k-z}^k \d y \, (1+y) c(y)+ \int_k^\infty \d z \, \varphi(z) \int_{0}^k \d y\, (1+y)  c(y) \biggr] \\
& \le \int_0^\infty \d x \,\bigl\lvert E(x)\bigr\rvert   \biggl[ \int_0^\infty \d z \, \varphi(z) \int_0^\infty  \d y \,(1+y) c(y) \int^{y+z}_{\max(z,y)} \d k + \int_0^\infty \d k \int_k^\infty \d z\,\varphi(z) (1+\rho) \biggr]  \\
& \le  \int_0^\infty \d x \,\bigl\lvert E(x) \bigr\rvert \biggl[ 2 \int_0^\infty \d z \, \varphi(z) z  (1+\rho) \biggr] \,.\\
 \end{split}\]
By using the assumption $|\partial_2 K(x,y,z)|\le (1+x) \varphi(z)$ and similar arguments as for $A[e]$ above, we also have 
\[\begin{split}\int_0^\infty \d k \int_0^\infty \d z \int_{\max(k-z,0)}^k \d y \, \bigl\lvert B[e](y+z,z) \bigr\rvert c(y+z)
& \le \int_0^\infty\d x \, \bigl\lvert E(x) \bigr\rvert \biggl[ 2 \int_0^\infty \d z \, \varphi(z) z  (1+\rho)\biggr] \end{split}.\]
We recall the assumption~\eqref{ass: 1-diff K}: $|\partial_1 \partial_2 K(x,y,z)|\le \varphi(z)$ and define $D(x)= \int_x^\infty \d y \, d(y)$ to estimate
\[\begin{split}
	\MoveEqLeft \int_0^\infty \d k \int_0^\infty \d z  \int_{\max(k-z,0)}^k \d y  \, \bigl\lvert E(y)\bigr\rvert \, \bigl\lvert\partial_1 A[d](y,z)\bigr\rvert \\
 &\le \int_0^\infty \d k \int_0^\infty \d z \int_{\max(k-z,0)}^k \d y \, \bigl\lvert E(y)\bigr\rvert \int_0^\infty \d x \, D(x) \bigl\lvert \partial_2 \partial_1 K(x,y,z) \bigr\rvert \\
& \le \|D\|_1 \int_0^\infty \d k \biggl[ \int_0^{k} \d z \, \varphi(z) \int_{k-z}^k \d y \, \bigl\lvert E(y)\bigr\rvert    + \int_{k}^{\infty} \d z \, \varphi(z) \int_{0}^k \d y \, \bigl\lvert E(y)\bigr\rvert \biggr] \\
& \le  \|D\|_1 \biggl[ \int_0^\infty \d z \, \varphi(z)  \int_0^{\infty} \d y \, \bigl\lvert E(y)\bigr\rvert  \int_{\max(z,y)}^{y+z} \d k  +   \int_0^\infty \d k \int_{k}^{\infty} \d z \, \varphi(z) \int_{0}^k \d y \, \bigl\lvert E(y)\bigr\rvert \biggr]\\
& \le  \|D\|_1  \int_0^\infty \d y \, \bigl\lvert E(y)\bigr\rvert \biggl[ \int_0^\infty \d z \, \varphi(z) z    + \int_0^\infty \d z \, \varphi(z) z   \biggr] \\
& \le 2  \|D\|_1  \int_0^\infty \d y \, \bigl\lvert E(y)\bigr\rvert \biggl[ \int_0^\infty \d z \, \varphi(z)z \biggr] \,. \end{split}\]
Similarly, we get the estimate 
\[\begin{split}
	\int_0^\infty \d k \int_0^\infty \d z \int_{\max(k-z,0)}^k \!\! \d y \,  \bigl\lvert E(y+z)\bigr\rvert \bigl\lvert \partial_1 B[d](y+z,z)\bigr\rvert 
& \le 2 \|D\|_1  \int_0^\infty \d y \bigl\lvert E(y)\bigr\rvert \biggl[ \int_0^\infty \d z  \,  \varphi(z)  z \biggr] \,. 
\end{split}\]
Since the first moment is preserved under time evolution, the four terms above can be controlled by $\cM_1(c_0)=\cM_1(d_0)= \rho=\|D\|_1$.
Next, with a change of variable,
we observe that the boundary terms vanish
\[\begin{split}\int_0^\infty \d k  \int_0^k \d z \, \bigl\lvert E(k-z)\bigr\rvert A[d](k-z,z) &= \int_0^\infty \d z \int_z^\infty \d k  \, \bigl\lvert E(k-z)\bigr\rvert A[d](k-z,z) \\
& = \int_0^\infty \d k \int_0^\infty \d z \, \bigl\lvert E(k)\bigr\rvert A[d](k,z). 
\end{split}\]
as well as
\[\begin{split}
\int_0^\infty \d k \int_0^\infty \d z \, \bigl\lvert E(k+z) \bigr\rvert B[d](k+z,z)& = \int_0^\infty \d z \int_z^\infty \d k \, \bigl\lvert E(k) \bigr\rvert B[d](k,z)\\
& =\int_0^\infty \d k \int_0^k \d z \, \bigl\lvert E(k) \bigr\rvert B[d](k,z) \,.
\end{split}\]
In summary, we conclude the Gronwall estimate
\[\frac{\d }{\d t} \int_0^\infty \d k \, \bigl\lvert E_t(k) \bigr\rvert \le C \int_0^\infty \d x \bigl\lvert E_t(x) \bigr\rvert \,, \] 
where $C = 4\|\varphi\|_{1,1} (1+2 \rho) $
so that by the Gronwall Lemma, we conclude
\[
	\int_0^\infty \d k \, \bigl\lvert E_t(k)\bigr\rvert  \le e^{C t} \int_0^\infty \d k \, \bigl\lvert E_0(k)\bigr\rvert  = 0 \,, 
\]
since $E_0(k)=0$ for each $k\geq 0$ given the same initial condition. 
\end{proof}

\bibliographystyle{abbrv}
\bibliography{bib.bib}
\end{document}